\DeclareMathOperator{\aff}{aff}
\DeclareMathOperator{\conv}{conv}
\DeclareMathOperator{\init}{init}
\DeclareMathOperator{\NVol}{NVol}
\DeclareMathOperator{\Ht}{ht}
\newcommand{\C}{\mathbb{C}}
\newcommand{\R}{\mathbb{R}}
\newcommand{\Z}{\mathbb{Z}}
\newcommand{\imag}{\mathbf{i}}
\newtheorem{problem}{Problem Statement}
\newtheorem{remark}{Remark}
\begin{document}

\title{Counting equilibria of the Kuramoto model using birationally invariant intersection index}

\author{
    Tianran Chen%
    \thanks{Department of Mathematics and Computer Science,
        Auburn University at Montgomery,
        Montgomery AL USA}
    \and
    Robert Davis%
    \thanks{Department of Mathematics,
        Michigan State University,
        East Lansing, MI USA.}
    \and
    Dhagash Mehta%
    \thanks{Systems Department, United Technologies Research Center, East Hartford, CT, USA.}
}

\maketitle

\begin{abstract}
    Synchronization in networks of interconnected oscillators is a fascinating
    phenomenon that appear naturally in many independent fields of
    science and engineering.
    A substantial amount of work has been devoted to  understanding all
    possible synchronization configurations on a given network.
    In this setting, a key problem is to determine the total number of such configurations.
    Through an algebraic formulation, for tree and cycle graphs, we provide an upper bound on this number using
    the birationally invariant intersection index of a system of
    rational functions on a toric variety.
\end{abstract}

\section{Introduction}\label{sec:introduction}

The root counting problem for systems of nonlinear equations is a fundamental
problem in mathematics that has a wide range of applications.
Given an algebraic variety $X$ and complex vector spaces $L_1,\dots,L_n$
of rational functions on $X$, it has been established by
K. Kaveh and A.G. Khovanskii~\cite{kaveh_newton_2009}
that for generic choices $f_1 \in L_1,\dots,f_n \in L_n$,
the number of common complex roots of $f_1,\dots,f_n$ in $X$ is a fixed number,
known as the \emph{birationally invariant intersection index} of
$L_1,\dots,L_n$ in $X$, denoted $[L_1,\dots,L_n]$.
Moreover, $[L_1,\dots,L_n]$ is given by the mixed volume of Newton-Okounkov bodies
associated with $L_1,\dots,L_n$ and hence a far generalization of the well known
BKK bound~\cite{bernshtein_number_1975,khovanskii_newton_1978,kushnirenko_newton_1976}.
Computation of $[L_1,\dots,L_n]$ remains difficult.
This paper focuses on the indirect computation of this index for
an algebraic formulation of the ``Kuramoto equations''  rooted in
the study of spontaneous synchronization in networks of connected oscillators
which is a ubiquitous phenomenon that has been discovered and studied
in a wide range of disciplines including physics, biology, chemistry,
and engineering~\cite{dorfler_synchronization_2014}.
Mathematically, a network of $N=n+1$ oscillators can be described
by a weighted graph $G = (V,E,A)$ in which vertices $V = \{0,\dots,n\}$
represent the oscillators, edges $E$ represent their connections, and
weights $A = \{a_{ij}\}$ represent the \emph{coupling strength} along edges.
In isolation, the oscillators have their own natural frequency
$\omega_0,\dots,\omega_n$.
However, in a network of oscillators the tug of war between the oscillators' tendency
to oscillate in their own natural frequencies and the influence of their neighbors
gives rise to rich and complicated phenomenon.
This is captured by the Kuramoto model~\cite{Kuramoto1975}
\begin{equation}
    \frac{d \theta_i}{dt} =
    \omega_i -
    \sum_{j \in \mathcal{N}_G(i)} a_{ij} \sin(\theta_{i}-\theta_{j})
    \quad \text{ for } i = 0,\dots,n
    \label{equ:kuramoto-ode}
\end{equation}
where each $\theta_i \in [0,2\pi)$ is the phase angle that describes the status
of the $i$-th oscillator,
and $\mathcal{N}_G(i)$ is the set of neighbors of the $i$-th vertex.
A configuration $\boldsymbol{\theta} = (\theta_0,\dots,\theta_n)$
is said to be in \emph{frequency synchronization}
if $\frac{d\theta_i}{dt} = 0$ for all $i$ at $\boldsymbol{\theta}$.
To remove the inherent degree of freedom given by uniform rotations,
it is customary to fix $\theta_0 = 0$.
Then such synchronization configurations are characterized by the
system of $n$ nonlinear equations
\begin{equation}
    \omega_i - \sum_{j \in \mathcal{N}_G(i)} a_{ij} \sin(\theta_{i}-\theta_{j}) = 0
    \quad \text{ for } i = 1,\dots,n
    \label{equ:sync-sin}
\end{equation}
in the variables $\theta_1,\dots,\theta_n$ with constant $\theta_0 = 0$.
Then, the root counting problem is:
\begin{problem}[Real solution count]\label{prb:original}
    Given $\omega_1,\dots,\omega_n \in \R$ and a weighted graph of $n+1$ nodes,
    what is the maximum number of real solutions
    the induced system~\eqref{equ:sync-sin} could have?
\end{problem}

An upper bound to this answer, that is independent from network topology,
is shown to be $\binom{2n}{n}$~\cite{Baillieul1982}.
However, recent studies~\cite{Chen2016,Mehta2015}
suggests much tighter upper bounds that are sensitive to network topology
may exist.
In this paper, we show that this is true.

To leverage tools from algebraic geometry, we shall reformulate
the synchronization system~\eqref{equ:sync-sin} 
as a system of rational equations.
Using the identity
$\sin(\theta_{i} - \theta_{j}) = \frac{1}{2\imag}
(e^{ \imag(\theta_{i} - \theta_{j})} - e^{-\imag(\theta_{i} - \theta_{j})})$
where $\imag = \sqrt{-1}$,~\eqref{equ:sync-sin} can be transformed into
\begin{equation*}
    \omega_{i} -
    \sum_{j \in \mathcal{N}_G(i)}
    \frac{a_{i,j}}{2\imag} (
    e^{ \imag \theta_{i}} e^{-\imag \theta_{j}} -
    e^{-\imag \theta_{i}} e^{ \imag \theta_{j}}
    ) = 0
    \quad \text{ for } i = 1,\dots,n.
\end{equation*}
With the substitution $x_{i} := e^{\imag \theta_{i}}$ for $i = 1,\dots,n$,
we obtain the Laurent polynomial system
\begin{equation}
    \label{equ:sync-laurent}
    F_{G,i}(x_1,\dots,x_n) = \omega_{i} - \sum_{j \in \mathcal{N}_G(i)} a_{ij}'
    \left(
        \frac{x_i}{x_j} - \frac{x_j}{x_i}
    \right) = 0
    \quad \text{ for } i = 1,\dots,n
\end{equation}
where $a_{ij}' = \frac{a_{ij}}{2\imag}$ and $x_0 = 1$ is a constant.
This system, $F_G = (F_{G,1},\dots,F_{G,n})$, is a system of $n$ rational equations
in the $n$ complex variables $\mathbf{x} = (x_1,\dots,x_n)$.
Since $x_i$'s appear in the denominator positions,
$F_G$ is only defined on ${(\C^*)}^n = {(\C \setminus \{0\})}^n$.
Clearly, each equivalence class of real solutions of~\eqref{equ:sync-sin}
(modulo translations by multiples of $2\pi$)
corresponds to a solution of~\eqref{equ:sync-laurent} in ${(\C^*)}^n$.
Therefore, we can consider a more general root counting problem:

\begin{problem}[$\C^*$-solution count problem]\label{prb:Cstar}
    Given nonzero constants $\omega_1,\dots,\omega_n$ and a weighted graph
    of $n+1$ nodes with weights $\{ a_{ij}' \}$,
    what is the maximum number of isolated $\C^*$-solutions
    the system~\eqref{equ:sync-laurent} could have?
\end{problem}

Clearly, every answer for Problem~\ref{prb:Cstar} would provide
an upper bound for the answers for Problem~\ref{prb:original}.
However, the algebraic formulation for Problem~\ref{prb:Cstar} allows the
use of powerful tools from complex algebraic geometry, in particular,
the theory of birationally invariant intersection index
which states that the maximum number of isolated solutions coincide with
the ``generic'' number of isolated solutions of an appropriate
family of systems:
For each vertex $i=1,\dots,n$, define the complex vector space of rational functions
\begin{equation}
    L_{G,i} = \operatorname{span} \left(
        \{1\} \; \cup \;
        {\{ x_i x_j^{-1} - x_i^{-1} x_j \}}_{j\in\mathcal{N}_G(i)}
        \right).
    \label{equ:L-space}
\end{equation}
With this construction, the $i$-th equation in~\eqref{equ:sync-laurent}
is an element in $L_{G,i}$.
Therefore, the number of $\C^*$-solutions of~\eqref{equ:sync-laurent}
for generic choices of weights and constant terms will be equal to
the number of common roots of $n$ generic elements from $L_{G,1},\dots,L_{G,n}$
respectively within the toric variety ${(\C^*)}^n$.
This is precisely the birationally invariant intersection index
\cite{kaveh_newton_2009}, denoted $[L_{G,1},\dots,L_{G,n}]$.
\begin{problem}[Birationally invariant intersection index]\label{prb:index}
   Given a graph $G$ with $n+1$ vertices $0,1,\dots,n$,
   let $L_{G,i} = \operatorname{span} \left(
   \{1\} \; \cup \;
   {\{ x_i x_j^{-1} - x_i^{-1} x_j \}}_{j\in\mathcal{N}(i)}
   \right)$.
   What is $[\; L_{G,1}\,,\, \dots \, ,\, L_{G,n} \;]$~?
\end{problem}

Though the intersection index $[L_{G,1},\dots,L_{G,n}]$ can be expressed
as the generalized mixed volume of the Newton-Okounkov bodies associated with
$L_{G,1},\dots,L_{G,n}$,
its direct computation, in general, remains a difficult problem.
Using a construction known as the ``adjacency polytope bound''
developed in~\cite{chen_unmixing_2017,chen_network_2015},
the primary contribution of this paper is the computation of explicit formulas
for the birationally invariant intersection index $[L_{G,1},\dots,L_{G,n}]$
for certain graphs.
In particular, we show that for trees and cycles of $N$ vertices,
the intersection index $[L_{G,1},\dots,L_{G,n}]$ is
$2^{N-1}$ and $N \binom{N-1}{\lfloor (N-1)/2 \rfloor}$ respectively.
Both are significantly less than the only known upper bound $\binom{2N-2}{N-1}$
for the general case (hetergeneous oscillators with nonuniform coupling)
of the Kuramoto equations~\eqref{equ:sync-sin} even for small values of $n$.
Asymptotically, in both cases, the ratio between the new bounds and
$\binom{2N-2}{N-1}$ goes to zero as $N \to \infty$.
Moreover, the intersection index derived from tree graphs also coincide with
the well known lower bound of the number of \emph{real} solutions to the
original (non-algebraic) system~\eqref{equ:sync-sin}
showing that the intersection index derived from a complex solution bound
can actually be attained by real solutions.
These results are dramatic improvements over the existing bound on the
number of synchronization configuration for a Kuramoto model.
They also confirm the crucial role network topology plays in the
exhaustive study of synchronization in Kuramoto model.
From a computational view point, these generically exact explicit upper bounds
on the number of solutions are also of great importance in numerical methods
for finding all synchronization configurations for the Kuramoto model:
It provides an explicit stopping criteria for iterative solvers such as
Newton-based solvers as well as the homotopy-based Monodromy method~\cite{Duff2016}.
The secondary contribution is the general approach of computing
the birationally invariant intersection index by finding the appropriate relaxation:
Using the much simpler construction of the adjacency polytope bound,
the problem is transformed into a problem of computing normalized volumes
for certain polytopes.

The rest of the paper is structured as follows.
In \S\ref{sec:background}, we briefly review the Kuramoto model
and existing results on the number of possible equilibria.
\S\ref{sec:prelim} reviews notations and well known theorems to be used.
In \S\ref{sec:tree} and \S\ref{sec:cycle},
we compute $[L_{G,1},\dots,L_{G,n}]$ for trees and cycles respectively.

\section{Kuramoto model and synchronization equations}\label{sec:background}

\begin{wrapfigure}[12]{r}{0.30\textwidth}
    \centering
    \begin{tikzpicture}[scale=0.4]
        \draw[densely dotted,color=gray] (0,0) circle(5.0);
        \draw[decoration={segment length=2.0mm,amplitude=1mm,coil},decorate] (5,0) -- (0,5);
        \draw[decoration={segment length=2.5mm,amplitude=1mm,coil},decorate] (0,5) -- (-3,-4);
        \draw[decoration={segment length=1.5mm,amplitude=1mm,coil},decorate] (5,0) -- (3,-4);
        \draw[decoration={segment length=1.5mm,amplitude=1mm,coil},decorate] (3,-4) -- (-3,-4);
        \draw[->,thick,color=black] ( 5, 0) arc[radius=5, start angle=0  , end angle=20];
        \draw[->,thick,color=black] ( 0, 5) arc[radius=5, start angle=90 , end angle=110];
        \draw[->,thick,color=black] (-3,-4) arc[radius=5, start angle=233, end angle=213];
        \draw[->,thick,color=black] ( 3,-4) arc[radius=5, start angle=307, end angle=327];
        \filldraw[thick,fill=cyan]  ( 5, 0) circle(0.4); 
        \filldraw[thick,fill=cyan]  ( 0, 5) circle(0.4); 
        \filldraw[thick,fill=cyan]  ( 3,-4) circle(0.4); 
        \filldraw[thick,fill=cyan]  (-3,-4) circle(0.4); 
    \end{tikzpicture}
    \caption{A spring network}\label{fig:spring-network}
\end{wrapfigure}
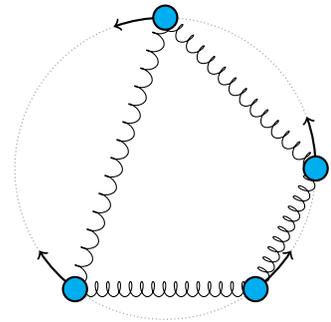

The study of synchronization in networks of coupled oscillators
is a particularly pervasive subject in a wide range of independent fields of study
in biology, physics, chemistry, engineering, and social science.
We refer to~\cite{blekhman_synchronization_1988,Acebron2005}
for a detailed historical account for this topic.
The simplest mechanical analog of the coupled oscillator model~\eqref{equ:sync-sin}
is a spring network, shown in Figure~\ref{fig:spring-network}, that consists of
a set of weightless particles constrained to move on the unit circle without
friction or collision~\cite{dorfler_synchronization_2014}.
Here, the coupling strength\footnote{In the original model proposed by Kuramoto,
the coupling strengths are symmetric, i.e., $a_{ij} = a_{ji}$.
However, in more general applications (such as power-flow equations),
perfect symmetry may not hold.}
$a_{ij} > 0$ characterizes the stiffness
of the spring connecting particles $i$ and $j$,
and $\frac{d\theta_i}{dt}$ represents the angular velocity
(or equivalently, frequency) of the $i$-th particle.
Of great interest is the configuration in which the angular velocity of
\emph{all} particles can become perfectly aligned, known as
\emph{frequency synchronization}.
That is, $\frac{d\theta_i}{dt} = c$ for $i=0,\dots,n$ and a constant $c$.
Adopting a rotational frame of reference, we can always assume $c = 0$.
That is, frequency synchronization configurations are equivalent to equilibria
of the Kuramoto model~\eqref{equ:kuramoto-ode}.
Under this assumption, the $n+1$ equilibrium equations must sum to zero.
This allows the elimination of one of the equations,
producing the system~\eqref{equ:sync-sin} of $n$ equations in $n$ unknowns.
Despite its mechanical origin, the frequency synchronization system~\eqref{equ:sync-sin}
naturally appears in a long list of seemingly unrelated fields,
including electrical power networks~\cite{Baillieul1982,dorfler_2013},
flocking behavior in biology and control theory~\cite{Justh2004,Vicsek1995},
and decentralized clock synchronization~\cite{Simeone2008}.
We refer to~\cite{dorfler_synchronization_2014} for a detailed list.

In~\cite{Baillieul1982}, an upper bound on the number of equilibria of the
Kuramoto model (solutions to~\eqref{equ:sync-sin}) induced by a graph of
$N$ vertices with any coupling strengths is shown to be $\binom{2N-2}{N-1}$.
For certain cases such as the Kuramoto model on the one, two and three-dimensional
lattice graphs with different boundary conditions,
as well as for complete and planar graphs,
all or at least a class of equilibria were
analytically~\cite{Casetti:June2003:0022-4715:1091,delabays2016multistability,delabays2017multistability,kastner2011stationary,mehta2011stationary,Nerattini:2012pi,ochab2010synchronization,xin2016analytical}
and numerically~\cite{Hughes:2012hg,hughes2014inversion,manik2016cycle,Mehta:2013iea,xi2017synchronization}
found in previous studies.
For tree graphs of $N$ nodes, it is well known that there could be as many as
$2^{N-1}$ real equilibria.
Various algebraic formulations have been used to leverage results from
algebraic geometry and numerically find some or all equilibria for certain
small graphs~\cite{Chen2016,mehta2011finding,Mehta2015,Mehta:2009zv}.
Recently, in the special case of ``rank-one coupling'', i.e., the matrix $[a_{ij}]$
has rank 1, a much smaller bound $2^N-2$ was established~\cite{Coss2017}.
Based on the theory of the BKK bound,
a search for topology-dependent bounds on the number of solutions
to~\eqref{equ:sync-sin} and~\eqref{equ:sync-laurent}
was initiated in~\cite{Chen2016,Mehta2015}.
In the present contribution, we provide explicit formulas for a much stronger
solution bound: the birationally invariant intersection index.

\section{Preliminaries and notations}\label{sec:prelim}

For a compact set $Q \subset \R^n$, $\operatorname{vol}_n(Q)$ denotes its
standard Euclidean volume, and the quantity $n! \operatorname{vol}_n(Q)$
is its \emph{normalized volume}, denoted $\NVol_n(Q)$.
Say $Q$ is \emph{convex} if it contains the line segment
connecting any two points $Q$.
For a set $X \subset \R^n$, its \emph{convex hull} is the smallest convex set
containing it, denoted $\conv(X)$,
and its \emph{affine span} is the smallest affine subspace of $\R^n$
containing it, denoted $\aff(X)$.
A (\emph{convex}) \emph{polytope} is the convex hull of a finite set of points.
Of particular importance in the current context are convex polytopes whose
vertices lie in $\Z^n$.
Such polytopes are called \emph{lattice polytopes}.
A full dimensional convex lattice polytope $P \subset \R^n$ is said to be
\emph{reflexive} if its dual
\[
P^* = \{ \mathbf{x} \in \R^n \mid \langle \mathbf{x}, \mathbf{p} \rangle \ge -1,\; \forall \mathbf{p} \in P \}
\]
is also a lattice polytope.
Given two convex polytopes $P \subset \R^n$ and $Q \subset \R^m$
both containing the origin, their \emph{free sum}, denoted $P \oplus Q$,
is $\conv ( P' \cup Q' ) \subset \R^{n+m}$ where
\[
P' = \{ (\mathbf{p}, \mathbf{0}) \in \R^{n+m} \mid \mathbf{p} \in P \}
\]
and
\[
Q' = \{ (\mathbf{0}, \mathbf{q}) \in \R^{n+m} \mid \mathbf{q} \in Q \}.
\]
An important fact is that under mild conditions, the normalized volume of
a free sum of lattice polytopes factors.
\begin{lemma}[{\cite[Theorem 1]{BraunFreeSum}}]\label{lem:free sum-vol}
    Given two convex lattice polytopes $P$ and $Q$ both containing the origin
    as an interior point, if one of them is reflexive, then
    $\NVol(P \oplus Q) = \NVol(P) \cdot \NVol(Q)$.
\end{lemma}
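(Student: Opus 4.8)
The plan is to prove the stronger, standard fact that the Ehrhart $h^*$-polynomials are multiplicative under free sum, $h^*_{P\oplus Q}(t)=h^*_P(t)\,h^*_Q(t)$, and then specialize at $t=1$, using that $h^*_R(1)=\NVol(R)$ for every lattice polytope $R$ (the normalized volume is the sum of the coefficients of the $h^*$-polynomial). Assume without loss of generality that $P\subset\R^n$ is the reflexive one and $Q\subset\R^m$; note that $0$ is an interior point of $P\oplus Q$ and $\dim(P\oplus Q)=n+m$, so the Ehrhart machinery applies.

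First I would pass from dilation to the gauge (Minkowski) functional. For a lattice polytope $R$ with $0$ in its interior and a point $u$ in the ambient space, set $\|u\|_R=\min\{a\ge 0: u\in aR\}$; this is finite and $u\in aR\iff a\ge\|u\|_R$ for $a\ge 0$. Using the elementary description $P\oplus Q=\{(sp,(1-s)q): p\in P,\ q\in Q,\ s\in[0,1]\}$, one checks that an integer point $(u,v)$ lies in $k(P\oplus Q)$ for $k\in\Z_{\ge 0}$ precisely when $k$ can be split as $k=a+b$ with $a\ge\|u\|_P$ and $b\ge\|v\|_Q$, i.e. exactly when $\|u\|_P+\|v\|_Q\le k$. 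Hence the Ehrhart counting functions are $L_{P\oplus Q}(k)=\#\{(u,v)\in\Z^{n+m}: \|u\|_P+\|v\|_Q\le k\}$, $L_P(k)=\#\{u\in\Z^n:\|u\|_P\le k\}$, and $L_Q(k)=\#\{v\in\Z^m:\|v\|_Q\le k\}$.

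The crux is reflexivity of $P$. Writing $P=\{x:\langle x,a_F\rangle\le 1\text{ for each facet normal }a_F\}$ --- possible exactly because $P$ is reflexive, so the facet normals $a_F$ (the vertices of $P^*$) are integral --- one gets $\|u\|_P=\max\bigl(0,\ \max_F\langle u,a_F\rangle\bigr)$, which is a \emph{nonnegative integer} for every $u\in\Z^n$. With that, the generating-function bookkeeping is routine: from the first step, $\operatorname{Ehr}_{P\oplus Q}(t):=\sum_{k\ge 0}L_{P\oplus Q}(k)t^k=\sum_{(u,v)\in\Z^{n+m}}\dfrac{t^{\lceil\|u\|_P+\|v\|_Q\rceil}}{1-t}$, and by integrality of $\|u\|_P$ the exponent is $\|u\|_P+\lceil\|v\|_Q\rceil$, so the sum factors as $\bigl(\sum_u t^{\|u\|_P}\bigr)\bigl(\sum_v \tfrac{t^{\lceil\|v\|_Q\rceil}}{1-t}\bigr)$. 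The second factor is $\operatorname{Ehr}_Q(t)$, and the first equals $(1-t)\operatorname{Ehr}_P(t)$ by telescoping $\#\{u:\|u\|_P=k\}=L_P(k)-L_P(k-1)$ (with $L_P(-1):=0$). Thus $\operatorname{Ehr}_{P\oplus Q}(t)=(1-t)\operatorname{Ehr}_P(t)\operatorname{Ehr}_Q(t)$; writing each Ehrhart series as $h^*_R(t)/(1-t)^{\dim R+1}$ and cancelling gives $h^*_{P\oplus Q}(t)=h^*_P(t)\,h^*_Q(t)$, and evaluating at $t=1$ yields $\NVol(P\oplus Q)=\NVol(P)\cdot\NVol(Q)$.

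The main obstacle is the conceptual one hidden in the third step: recognizing that reflexivity of $P$ is precisely the hypothesis making $\|\cdot\|_P$ integer-valued on the lattice, which is what decouples the ceiling function (otherwise $\lceil\|u\|_P+\|v\|_Q\rceil$ links $u$ and $v$ and multiplicativity fails). Everything else is bookkeeping: the $k=0$ boundary term in the telescoping, and checking that nothing about the arbitrary polytope $Q$ is secretly needed beyond $0\in\operatorname{int}Q$.
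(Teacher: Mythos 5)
The paper does not prove this lemma; it simply cites it from the BraunFreeSum reference. Your proof is correct and follows essentially the cited argument of Braun: the factorization $\operatorname{Ehr}_{P\oplus Q}(t)=(1-t)\,\operatorname{Ehr}_P(t)\,\operatorname{Ehr}_Q(t)$ rests on the gauge of the reflexive summand being integer-valued on the lattice (which decouples the ceiling), and evaluating the resulting $h^*$-identity $h^*_{P\oplus Q}=h^*_P\,h^*_Q$ at $t=1$ gives the normalized-volume statement.
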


The set ${(\C^*)}^n$, known as an \emph{algebraic torus}, has the structure of an
abelian group under component-wise multiplication,
and it will be the space in which we study the root count of synchronization equations.
A \emph{Laurent monomial} in $\mathbf{x}=(x_1,\dots,x_n)$ induced by vector
$\mathbf{a} = (a_1,\dots,a_n) \in \Z^n$ is the formal expression
$\mathbf{x}^{\mathbf{a}} = x_1^{a_1} \cdots x_n^{a_n}$.
It is easy to verify that as a map from ${(\C^*)}^n$ to $\C^*$,
$\mathbf{x}^{\mathbf{a}}$ is actually a character, i.e., a group homomorphism.
In general, a system of Laurent monomials induced by
$\mathbf{a}_1,\dots,\mathbf{a}_m \in \Z^n$ give rise to the group homomorphism
$\mathbf{x} \mapsto (\mathbf{x}^{\mathbf{a}_1},\dots,\mathbf{x}^{\mathbf{a}_m})$
between ${(\C^*)}^n$ and ${(\C^*)}^m$.
Of particular importance, is the case where $m=n$.
\begin{lemma}[\cite{fulton_introduction_1993}]~\label{lem:toric-automorphism}
    Given vectors $\mathbf{a}_1,\dots,\mathbf{a}_n \in \Z^n$, the map
    $\mathbf{x} \mapsto (\mathbf{x}^{\mathbf{a}_1},\dots,\mathbf{x}^{\mathbf{a}_m})$
    is an automorphism of ${(\C^*)}^n$ if and only if
    $|\det [\mathbf{a}_1,\dots,\mathbf{a}_n]| = 1$
    and in that case, the map is a bi-holomorphism.
\end{lemma}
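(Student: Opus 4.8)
The plan is to set up the standard dictionary between monomial self-maps of the torus and integer matrices, and then read off the statement from the Smith normal form. Write $M$ for the $n\times n$ integer matrix whose $i$-th row is $\mathbf{a}_i$, so that $|\det M| = |\det[\mathbf{a}_1,\dots,\mathbf{a}_n]|$, and let $\phi_M\colon {(\C^*)}^n \to {(\C^*)}^n$ denote the map $\mathbf{x}\mapsto(\mathbf{x}^{\mathbf{a}_1},\dots,\mathbf{x}^{\mathbf{a}_n})$. First I would record three elementary facts, each a one-line check: (i) $\phi_P\circ\phi_Q = \phi_{PQ}$ for integer matrices $P,Q$, since the exponent vectors of the composite monomials are exactly the rows of $PQ$; (ii) $\phi_{I_n}$ is the identity; (iii) $\phi_P = \phi_Q$ forces $P=Q$, because $\mathbf{x}^{\mathbf{v}}\equiv 1$ on ${(\C^*)}^n$ implies $\mathbf{v}=\mathbf{0}$ (substitute $\mathbf{x}=(1,\dots,1,t,1,\dots,1)$ in each slot and let $t$ vary).

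For the ``if'' direction, suppose $|\det M| = 1$, i.e., $M\in\mathrm{GL}_n(\Z)$. Then $M^{-1}$ is again integral, so $\phi_{M^{-1}}$ is defined and, by facts (i) and (ii), is a two-sided inverse of $\phi_M$. Every coordinate of $\phi_M$ and of $\phi_{M^{-1}}$ is a Laurent monomial, hence a holomorphic (indeed regular) function on ${(\C^*)}^n$; therefore $\phi_M$ is at once an automorphism of ${(\C^*)}^n$ and a bi-holomorphism, which also disposes of the ``in that case'' clause.

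For the converse I would invoke the Smith normal form $M = UDV$ with $U,V\in\mathrm{GL}_n(\Z)$ and $D$ diagonal with positive integer diagonal entries $d_1,\dots,d_n$ satisfying $d_1\cdots d_n = |\det M|$. By the ``if'' direction, $\phi_U$, $\phi_V$ and their inverses are automorphisms, so $\phi_M = \phi_U\circ\phi_D\circ\phi_V$ is an automorphism if and only if $\phi_D$ is. But $\phi_D(\mathbf{x}) = (x_1^{d_1},\dots,x_n^{d_n})$ is a bijection exactly when each coordinate map $t\mapsto t^{d_i}$ is injective on $\C^*$, i.e., when every $d_i=1$, i.e., when $|\det M|=1$. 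Taking the contrapositive yields ``only if''; in fact this paragraph alone reproves the entire equivalence.

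I expect the ``only if'' direction to be the only step carrying real content, namely the assertion that a monomial self-map of the torus with $|\det|\neq 1$ cannot be bijective, and the Smith-normal-form reduction above is the cleanest way to pin it down. An alternative I would keep in reserve is topological: ${(\C^*)}^n$ deformation-retracts onto ${(S^1)}^n$, so $\pi_1({(\C^*)}^n)\cong\Z^n$, and $\phi_M$ acts on $\pi_1$ (up to transpose) as multiplication by $M$; a homeomorphism must induce an automorphism of $\Z^n$, forcing $\det M=\pm1$. Everything else — the composition rule, the construction of the inverse, and the holomorphy claim — is routine bookkeeping, so no serious obstacle is anticipated.
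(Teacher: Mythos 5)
The paper does not prove this lemma; it is stated with a citation to Fulton's book, so there is no in-paper argument to compare against. Your proof is correct and is the standard one: you package the monomial maps into a monoid morphism $M \mapsto \phi_M$ with $\phi_P \circ \phi_Q = \phi_{PQ}$, settle the ``if'' direction by producing the explicit inverse $\phi_{M^{-1}}$ (which also disposes of the bi-holomorphism clause, since both $\phi_M$ and $\phi_{M^{-1}}$ have Laurent-monomial components and are therefore regular and holomorphic on ${(\C^*)}^n$), and reduce ``only if'' to the diagonal case via Smith normal form, where $t \mapsto t^d$ is injective on $\C^*$ precisely when $d=1$.

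One small wrinkle worth tightening: you describe the Smith normal form as having \emph{positive} diagonal entries with product $|\det M|$, which is only true when $M$ is nonsingular. When $\det M = 0$ the normal form has some $d_i = 0$, and $t \mapsto t^0 \equiv 1$ is not surjective onto $\C^*$, so $\phi_D$ is still not bijective; the conclusion survives, but you should either phrase the entries as nonnegative and treat $d_i=0$, or dispose of singular $M$ separately by noting that a nonzero $\mathbf{c}\in\Z^n$ in the left kernel of $M$ forces the image of $\phi_M$ into the proper subgroup $\{\mathbf{y}\in{(\C^*)}^n : \mathbf{y}^{\mathbf{c}}=1\}$. Your reserve argument via $\pi_1({(\C^*)}^n)\cong\Z^n$ is also sound and gives the quickest route to ``only if.''
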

For the integer matrix $A = [\mathbf{a}_1,\dots,\mathbf{a}_n]$
and $\mathbf{x} = (x_1,\dots,x_n)$ above, we use the compact notation
${\mathbf{x}}^A = (\mathbf{x}^{\mathbf{a}_1},\dots,\mathbf{x}^{\mathbf{a}_1})$
to represent the automorphism induced by $A$.
Such a square integer matrix $A$ with $|\det(A)| = 1$ is said to be \emph{unimodular}.
More generally, an integer matrix (not necessarily square) is
\emph{totally unimodular} if all its nonsingular submatrices are unimodular.
This concept also extend to lattice polytopes:
A lattice simplex is \emph{unimodular} if its normalized volume is 1,
and a simplicial subdivision of a lattice polytope is \emph{unimodular}
if it consists of only unimodular simplices.

A \emph{Laurent polynomial} is a finite linear combination of distinct Laurent monomials,
i.e., an expression of the form
$f = \sum_{\mathbf{a} \in S} c_{\mathbf{a}} \mathbf{x}^{\mathbf{a}}$ for some finite $S \subset \Z^n$.
The set $\conv(S) \subset \R^n$ is called the \emph{Newton polytope} of $f$.
Given a nonzero $\mathbf{v} \in \R^n$, $\init_{\mathbf{v}} f$ is defined to be
$\sum_{\mathbf{a} \in {(S)}_{\mathbf{v}}} c_{\mathbf{a}} \mathbf{x}^{\mathbf{a}}$
where ${(S)}_{\mathbf{v}}$ is the subset of $S$ on which the linear functional
$\langle \cdot, \mathbf{v} \rangle$ attains its minimum.
Extending this notation to a system of Laurent polynomials $F = (f_1,\dots,f_n)$,
we write $\init_{\mathbf{v}} F = (\init_{\mathbf{v}} f_1,\dots,\init_{\mathbf{v}} f_n$).
Newton polytopes play critical roles in calculating the generic number of isolated
solutions in ${(\C^*)}^n$ (or simply $\C^*$-solutions)
a system of $n$ Laurent polynomial equations could have.
Indeed, this generic $\C^*$-solution count is given by the mixed volume
of the Newton polytopes. This is the content of Bernshtein's
Theorem~\cite{bernshtein_number_1975,khovanskii_newton_1978,kushnirenko_newton_1976},
and this count has since been known as the \textbf{BKK bound}~\cite{canny_optimal_1991}.
Though we will not directly compute BKK bounds, the condition for BKK bound to
be exact will be used in establishing our main results.
\begin{theorem}[{\cite[Theorem B]{bernshtein_number_1975}}]\label{thm:bernshtein-b}
    Consider a system of $n$ Laurent polynomials $F = (f_1,\dots,f_n)$
    in $n$ variables.
    If $\init_{\mathbf{v}} F$ has no solution in ${(\C^*)}^n$ for any nonzero vector $\mathbf{v} \in \R^n$,
    then all solutions of $F(\mathbf{x}) = \mathbf{0}$ are isolated
    and the total number is exactly the BKK bound of the system.
\end{theorem}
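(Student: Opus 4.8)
The plan is to realize the system on a projective toric variety and reduce the statement to intersection theory. First I would let $P_i = \conv(S_i)$ be the Newton polytope of $f_i$, set $P = P_1 + \cdots + P_n$, and take $X = X_\Sigma$ to be the projective toric variety whose fan $\Sigma$ is the normal fan of $P$; then $X$ contains $T = {(\C^*)}^n$ as its dense torus and $\Sigma$ refines each normal fan $\mathcal{N}(P_i)$. Each $f_i$, viewed as a Laurent polynomial with Newton polytope $P_i$, extends to a global section $s_i$ of a basepoint-free (nef) line bundle $\mathcal{L}_i$ on $X$ attached to $P_i$, chosen so that the divisor $Z_i = \{s_i = 0\}$ meets $T$ exactly in $\{f_i = 0\}$. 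The crucial translation step is the orbit--cone dictionary: for a cone $\tau \in \Sigma$ with a relative-interior vector $\mathbf{v}$, the face of $P_i$ selected by $\mathbf{v}$ is $(P_i)_{\mathbf{v}}$, its lattice differences lie in $\tau^{\perp}$, and restricting $Z_i$ to the orbit $O_\tau \cong {(\C^*)}^{n-\dim\tau}$ yields precisely the zero locus of the Laurent polynomial obtained from $\init_{\mathbf{v}} f_i$ by dividing out one monomial supported in $(P_i)_{\mathbf{v}}$ and pushing the result down along the surjection $T \twoheadrightarrow O_\tau$. Hence $\init_{\mathbf{v}} F$ has a zero in ${(\C^*)}^n$ if and only if $Z_1 \cap \cdots \cap Z_n$ meets $O_\tau$.

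Granting this dictionary, the hypothesis that no $\init_{\mathbf{v}} F$ with $\mathbf{v} \ne \mathbf{0}$ has a torus root says exactly that $Z_1 \cap \cdots \cap Z_n$ avoids every orbit $O_\tau$ with $\tau \ne \{\mathbf{0}\}$, i.e.\ $Z_1 \cap \cdots \cap Z_n \subseteq O_{\{\mathbf{0}\}} = T$. Since this common zero set is closed in the complete variety $X$, it is itself complete; being contained in the affine variety $T$, it must be finite. So all solutions of $F = \mathbf{0}$ in ${(\C^*)}^n$ are isolated, and moreover the $Z_i$ meet properly (in dimension $0$). On a projective Cohen--Macaulay variety --- which $X$ is --- $n$ effective Cartier divisors meeting properly satisfy $\sum_p i(p; Z_1,\dots,Z_n) = (\mathcal{L}_1 \cdots \mathcal{L}_n)$, the top intersection number, with $i(p;\cdot)$ the local (Samuel) multiplicity. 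Thus the number of solutions, counted with multiplicity, equals $(\mathcal{L}_1 \cdots \mathcal{L}_n)$.

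It remains to identify this intersection number with the BKK bound. By multilinearity of the intersection product together with the polarization identity expressing mixed volume as an alternating sum of volumes of Minkowski sums, it suffices to know the single-bundle case $(\mathcal{L}^{\,n}) = n!\operatorname{vol}_n(P_{\mathcal{L}}) = \NVol_n(P_{\mathcal{L}})$ for a nef toric line bundle with polytope $P_{\mathcal{L}}$ --- the classical volume formula for toric varieties --- which matches the parallel identity $\MVol(P_1,\dots,P_n) = \sum_{\varnothing \ne I \subseteq \{1,\dots,n\}} (-1)^{n-|I|}\,\NVol_n\!\big(\textstyle\sum_{i \in I} P_i\big)$. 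Combining the two gives $(\mathcal{L}_1 \cdots \mathcal{L}_n) = \MVol(P_1,\dots,P_n)$, the BKK bound, completing the argument.

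I expect the main obstacle to be the orbit--cone translation step: making precise, for each cone $\tau$, that the scheme-theoretic restriction of $Z_i$ to the orbit $O_\tau$ is cut out by $\init_{\mathbf{v}} f_i$ up to an invertible monomial, and that zeros on $O_\tau$ correspond bijectively to torus zeros of the facial system. This is routine toric geometry but requires care with the several lattices in play ($M = \Z^n$, the character lattice $\tau^{\perp} \cap M$ of $O_\tau$, and the affine chart $U_\tau = \operatorname{Spec}\C[\tau^{\vee}\cap M]$) and with the monomial normalization. An alternative that sidesteps some of this is a deformation argument: embed $F$ into a family $F(t)$ with the same supports and generic endpoint, apply Bernshtein's Theorem~A to the generic member, and show that along any solution path escaping to $X \setminus T$ one extracts, via a one-parameter-subgroup limit, a torus zero of some $\init_{\mathbf{v}} F$; under the hypothesis no path escapes, so all $\MVol$ paths remain in $T$ and a degree/continuity argument delivers the count. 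One caveat on the statement itself: a system as simple as $(x-1)^2 = 0$ satisfies the hypothesis yet has a double root, so ``total number'' must be read as ``number counted with multiplicity''; for the purposes of this paper only the resulting inequality --- the number of isolated ${(\C^*)}^n$-solutions is at most the BKK bound --- is needed.
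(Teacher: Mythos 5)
The paper cites this result from Bernshtein's 1975 paper and does not reprove it, so there is no in-paper proof to compare against; I'll assess your argument on its own terms. Your toric-geometric proof is the standard modern derivation: compactify on $X_\Sigma$ for $\Sigma$ the normal fan of $P_1 + \cdots + P_n$, use the orbit--cone correspondence to show that the non-degeneracy hypothesis forces the common zero scheme to lie entirely in the dense torus, deduce finiteness from completeness-plus-affineness, and equate the intersection number of the associated nef line bundles with $\MVol(P_1,\dots,P_n)$ via multilinearity and the toric volume formula. The steps are correct, and the caveat you flag is genuine --- Bernshtein's Theorem~B counts roots with multiplicity, and your $(x-1)^2$ example shows the geometric count can be strictly smaller, so ``total number'' in the statement should indeed be read as ``number counted with multiplicity.'' One small correction to your closing sentence: the paper invokes the theorem as an equality, not merely an upper bound, in the proof of the final theorem of \S\ref{sec:cycle}, where it concludes that a generic $F_{C_N}$ has a $\C^*$-solution count exactly equal to the BKK bound. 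That application is still sound because the genericity in play there also excludes multiple roots (the discriminant locus is proper), so geometric and multiplicity counts coincide; but your reading of the theorem itself is the accurate one.
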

An important fact is that for generic choice of the coefficients,
the BKK bound is exact.
\begin{lemma}[{\cite{bernshtein_number_1975}}]\label{lem:init-sys}
    Let $F = (f_1,\dots,f_n)$ be a system of $n$ Laurent polynomials
    in $n$ variables.
    For generic choices of coefficients, and any nonzero $\mathbf{v} \in \R^n$
    $\init_{\mathbf{v}} F$ has no solution in ${(\C^*)}^n$.
\end{lemma}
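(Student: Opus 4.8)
\noindent
The plan follows Bernshtein's original argument. A priori the claim ranges over infinitely many weight vectors $\mathbf{v}$; I would first collapse this to finitely many directions, then normalize each direction by a unimodular monomial substitution, and finally observe that the resulting initial system is \emph{overdetermined} --- $n$ equations in only $n-1$ torus variables --- hence has no ${(\C^*)}^{n}$-solution for generic coefficients. Throughout, the supports $S_1,\dots,S_n$ of $f_1,\dots,f_n$ are fixed (and nonempty, since a generic $f_i\neq 0$) and only the coefficients vary, over $\C^{M}$ with $M=\sum_i|S_i|$.

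\textbf{Reducing to finitely many directions.} For nonzero $\mathbf{v}$, the face ${(S_i)}_{\mathbf{v}}$ --- and hence $\init_{\mathbf{v}} f_i$, for a fixed coefficient tuple --- depends only on which cone of the normal fan of $\conv(S_i)$ contains $\mathbf{v}$ in its relative interior. Passing to the common refinement $\Sigma$ of these $n$ normal fans, the whole tuple $\init_{\mathbf{v}} F$ is constant as $\mathbf{v}$ runs over the relative interior of a fixed cone $\sigma\in\Sigma$. There are finitely many such cones and each is rational, so every $\sigma$ of dimension $\ge 1$ contains a primitive integer vector $\mathbf{v}_\sigma$ in its relative interior. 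It therefore suffices to produce, for each such $\mathbf{v}_\sigma$, a nonempty Zariski-open $U_\sigma\subseteq\C^{M}$ on which $\init_{\mathbf{v}_\sigma} F$ has no root in ${(\C^*)}^{n}$; the finite intersection $\bigcap_\sigma U_\sigma$ is then the asserted generic set.

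\textbf{Normalizing a direction and producing the overdetermined system.} Fix a primitive $\mathbf{v}=\mathbf{v}_\sigma$ and complete it to a $\Z$-basis of $\Z^n$. By Lemma~\ref{lem:toric-automorphism} the associated unimodular monomial substitution $\psi$ is an automorphism of ${(\C^*)}^{n}$; it carries $F$ to a system $G$ with the same coefficient tuple (only the monomials are relabeled) and carries $\init_{\mathbf{v}} F$ to $\init_{\mathbf{e}_1} G$, where $\mathbf{e}_1=(1,0,\dots,0)$, so $\init_{\mathbf{v}} F$ has a root in ${(\C^*)}^{n}$ exactly when $\init_{\mathbf{e}_1} G$ does. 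Writing $g_i=\sum_k x_1^{k}\,p_{i,k}(x_2,\dots,x_n)$ and letting $k_i$ be least with $p_{i,k_i}\neq 0$, we get $\init_{\mathbf{e}_1} g_i = x_1^{k_i}\,p_{i,k_i}$, and because $x_1^{k_i}$ is a unit on the torus the common zero set of $\init_{\mathbf{e}_1} G$ in ${(\C^*)}^{n}$ is $\C^{*}\times V$, with $V\subseteq{(\C^*)}^{n-1}$ the common zero set of the $n$ Laurent polynomials $p_{1,k_1},\dots,p_{n,k_n}$ in the $n-1$ variables $x_2,\dots,x_n$. Off the coordinate hyperplanes $\{c_{i,\mathbf{a}}=0\}$, every $c_{i,\mathbf{a}}\neq 0$, so the exponents $k_i$ and the nonempty supports $T_i$ of the $p_{i,k_i}$ are the combinatorially determined ones and the coefficient tuple of $(p_{1,k_1},\dots,p_{n,k_n})$ is a surjective coordinate projection of that of $G$. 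So it remains to show that $n$ Laurent polynomials in $n-1$ torus variables with fixed nonempty supports have no common ${(\C^*)}^{n-1}$-root for generic coefficients in $\C^{M'}$, where $M'=\sum_i|T_i|$.

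\textbf{The overdetermined count and assembly.} I would run the standard incidence-variety argument. Let $Z=\{(\mathbf{x},\mathbf{c})\in{(\C^*)}^{n-1}\times\C^{M'} : p_{i,k_i}^{\mathbf{c}}(\mathbf{x})=0\text{ for all }i\}$, which is Zariski closed after clearing denominators by monomial units. For fixed $\mathbf{x}$ the fiber is the linear subspace of $\C^{M'}$ cut out by the $n$ equations $\sum_{\mathbf{t}\in T_i}c_{i,\mathbf{t}}\mathbf{x}^{\mathbf{t}}=0$, each a nontrivial linear condition on its own coordinate block (the coefficient vector ${(\mathbf{x}^{\mathbf{t}})}_{\mathbf{t}\in T_i}$ has no zero entry, and $T_i\neq\emptyset$), so every fiber has dimension $M'-n$ and hence every component of $Z$ has dimension at most $(n-1)+(M'-n)=M'-1$. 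Therefore the projection $Z\to\C^{M'}$ is not dominant; its image lies in a proper Zariski-closed set $W'$, and outside $W'$ we have $V=\emptyset$. Pulling $W'$ back to $\C^{M}$ and adjoining the hyperplanes $\{c_{i,\mathbf{a}}=0\}$ gives the proper closed complement of $U_\sigma$, and intersecting over the finitely many $\sigma$ completes the proof. I do not expect a deep obstacle: the conceptual content is just that an initial system always lives on one fewer torus coordinate while keeping $n$ equations. The step requiring real care is the first one --- verifying that the normal-fan refinement genuinely reduces $\R^{n}\setminus\{0\}$ to finitely many directions, each carrying a primitive integral representative suitable for the monomial substitution --- after which the rest is a routine dimension count.
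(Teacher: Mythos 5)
The paper states this lemma by citation to Bernshtein's 1975 paper and supplies no proof of its own, so there is nothing in the source to compare against line by line. Your argument is the classical one underlying BKK theory and it is correct. The reduction to finitely many directions via the common refinement of the (rational) normal fans is sound, and every cone of positive dimension indeed contains a primitive integral vector in its relative interior. The unimodular monomial substitution of Lemma~\ref{lem:toric-automorphism} correctly normalizes the weight to $\mathbf{e}_1$ once the change-of-variables matrix is chosen so that its action on exponents has $\mathbf{v}$ as its first row, i.e., $\langle A\mathbf{a},\mathbf{e}_1\rangle=\langle\mathbf{a},\mathbf{v}\rangle$; any primitive integer vector extends to such a unimodular matrix. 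After this normalization, $\init_{\mathbf{e}_1}g_i = x_1^{k_i}p_{i,k_i}(x_2,\dots,x_n)$ with $x_1^{k_i}$ a unit on the torus, the supports $T_i$ are the combinatorially determined ones away from the coordinate hyperplanes, and the coefficient blocks of the $p_{i,k_i}$ are disjoint coordinate projections of the original tuple. The incidence-variety count --- $n$ independent linear conditions on disjoint blocks for each fixed $\mathbf{x}\in{(\C^*)}^{n-1}$, so $\dim Z\le (n-1)+(M'-n)=M'-1<M'$ --- correctly shows the projection to $\C^{M'}$ is non-dominant, and pulling back and intersecting over the finitely many cones yields a single dense Zariski-open set that works simultaneously for every nonzero $\mathbf{v}$, matching the quantifier order in the statement. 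No gaps.
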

\begin{remark}\label{rmk:generic}
    ``Generic choice'' is a subtle concept in algebraic geometry.
    In the current context, it is sufficient to take the following
    ``probability one'' interpretation:
    If the coefficients are chosen at random (with independent distribution)
    among all possible complex coefficients, then with probability one,
    Lemma~\ref{lem:init-sys} is true.
    However, using the set of coefficients to parametrize a nonlinear system
    is not completely precise:
    For any nonsingular square matrix $A$, the system $F$ and $A \cdot F$
    are naturally equivalent.
    Consequently, the more precise parametrization using a certain Grassmannian
    has to be considered in order to make sense of the concept of generic choice.
\end{remark}

A relaxation of the BKK bound was developed in the context of algebraic
synchronization equations~\cite{chen_unmixing_2017}
as well as the closely related ``power-flow equations.''~\cite{chen_network_2015}.

\begin{definition}[Adjacency polytope]\label{def:adj-polytope}
    Given a graph $G$, we define its \textbf{adjacency polytope} to be
    \begin{equation*}
        \nabla_G =
        \conv(\nabla_{G,1} \cup \cdots \cup \nabla_{G,n}) =
        \conv( \{ \pm (\mathbf{e}_i-\mathbf{e}_j) \mid (i,j) \in E(G) \}).
    \end{equation*}
    The normalized volume $\NVol(\nabla_G)$ is called the
    \textbf{adjacency polytope bound} of $G$.
\end{definition}

The polytope $\nabla_G$ can be considered as a geometric encoding
of the topology of the graph $G$.
Adjacency polytopes have been previously studied in order to identify properties of a related semigroup algebra, such as in \cite{centrallysymmetric}.
However, previous work has not addressed the normalized volume of these polytopes.
A simple observation~\cite{chen_unmixing_2017,chen_network_2015} is that
the adjacency polytope bound (or simply, \textbf{AP bound}) is indeed an
upper bound for answers of Problem~\ref{prb:Cstar} and~\ref{prb:index}:


\begin{proposition}\label{pro:ap-bound}
    Given a graph $G$ containing vertices $\{0,1,\dots,n\}$,
    the number of isolated $\C^*$-solutions for the
    algebraic system~\eqref{equ:sync-laurent} is bounded by
    the AP bound $\NVol(\nabla_G)$.
\end{proposition}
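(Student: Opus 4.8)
The plan is to show that the Newton polytope of each Laurent polynomial $F_{G,i}$ is contained in the adjacency polytope $\nabla_G$, and then invoke the monotonicity of mixed volume together with Bernshtein's theorem to conclude. First I would examine the support of $F_{G,i}$ from \eqref{equ:sync-laurent}: it consists of the constant term (the zero vector $\mathbf{0} \in \Z^n$, coming from $\omega_i$ and $x_0 = 1$), together with the exponent vectors $\pm(\mathbf{e}_i - \mathbf{e}_j)$ for each neighbor $j \in \mathcal{N}_G(i)$, where we set $\mathbf{e}_0 = \mathbf{0}$ to account for the convention $x_0 = 1$. Every such vector $\pm(\mathbf{e}_i - \mathbf{e}_j)$ is a vertex of $\nabla_G$ by Definition~\ref{def:adj-polytope}, and $\mathbf{0}$ lies in $\nabla_G$ since $\nabla_G$ is centrally symmetric (for each edge it contains both $\mathbf{e}_i - \mathbf{e}_j$ and its negative, hence their midpoint $\mathbf{0}$). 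Therefore the Newton polytope $P_i := \conv(\operatorname{supp}(F_{G,i}))$ satisfies $P_i \subseteq \nabla_G$ for each $i = 1, \dots, n$.

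Next I would combine this containment with the BKK machinery. By Bernshtein's Theorem (the BKK bound, cited just before Theorem~\ref{thm:bernshtein-b}), the number of isolated $\C^*$-solutions of the system~\eqref{equ:sync-laurent} is at most the mixed volume $\MVol(P_1, \dots, P_n)$ of its Newton polytopes. Since mixed volume is monotone under inclusion of its arguments, $P_i \subseteq \nabla_G$ for all $i$ gives
\[
    \MVol(P_1, \dots, P_n) \le \MVol(\nabla_G, \dots, \nabla_G) = \operatorname{vol}_n(\nabla_G) \cdot n! = \NVol(\nabla_G),
\]
using the normalization that the mixed volume of $n$ copies of a single polytope $Q$ equals $n!\operatorname{vol}_n(Q)$. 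Chaining the two inequalities yields the claimed bound $\NVol(\nabla_G)$ on the number of isolated $\C^*$-solutions of~\eqref{equ:sync-laurent}.

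The one point requiring a little care — and the main obstacle — is the normalization convention and the precise form of the BKK upper bound: Bernshtein's theorem as stated (Theorem~\ref{thm:bernshtein-b}) gives \emph{exactness} only under the initial-form hypothesis, so for an arbitrary (non-generic) system one must appeal to the general fact that the BKK bound is always an \emph{upper} bound for the number of isolated solutions in $(\C^*)^n$, with equality for generic coefficients (Lemma~\ref{lem:init-sys}). I would state this semicontinuity explicitly, noting that for the particular coefficients appearing in~\eqref{equ:sync-laurent} the count of isolated solutions can only drop relative to the generic count, which is $\MVol(P_1,\dots,P_n)$. Everything else — identifying the support, checking $\mathbf{0} \in \nabla_G$, and monotonicity of mixed volume — is routine, so the proof is short.
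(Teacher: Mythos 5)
Your argument is correct and matches the approach the paper implicitly takes: the paper does not prove this proposition in the text (it cites \cite{chen_unmixing_2017,chen_network_2015}), but the chain~\eqref{equ:comparison} records exactly the inequalities you establish, namely $\C^*$-count $\le$ BKK bound $= \MVol(P_1,\dots,P_n) \le \MVol(\nabla_G,\dots,\nabla_G) = \NVol(\nabla_G)$, via containment of each Newton polytope in $\nabla_G$ and monotonicity of mixed volume. You also correctly flag that one needs the upper-bound form of Bernshtein's theorem (valid for arbitrary coefficients provided the solutions counted are isolated), not merely the exactness criterion of Theorem~\ref{thm:bernshtein-b}.
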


By comparing the constructions of the solution bounds outlined above,
it is easy to verify the following chain of inequalities
\begin{equation}
    \parbox{12ex}{\centering $\R$-solution\\ count of~\eqref{equ:sync-sin}}
    \;\le \;
    \parbox{12ex}{\centering $\C^*$-solution\\ count of~\eqref{equ:sync-laurent}}
    \; \le \;
    [L_{G,1},\dots,L_{G,n}]
    \; \le \;
    \parbox{7ex}{\centering BKK\\ bound} \; \le \;
    \parbox{7ex}{\centering AP\\ bound} 
    \label{equ:comparison}
\end{equation}

\section{Tree graphs}~\label{sec:tree}
This section provides the answers for Problem~\ref{prb:Cstar} and~\ref{prb:index}
for a tree graph $T_N$ containing $N=n+1$ vertices.
The strategy is to bound $[L_{T_N,1},\dots,L_{T_N,n}]$ from above using the AP bound,
and then bound it from below by examining the actual number of solutions.
With this, we shall show $[L_{T_N,1},\dots,L_{T_N,n}]$ is $2^n = 2^{N-1}$.
This agrees with a well known fact in the study of the Kuramoto model:
for tree graphs, the original (non-algebraic) Kuramoto model~\eqref{equ:kuramoto-ode}
could have as many as $2^{N-1}$ real equilibria.
This shows that even though it is derived from a complex algebraic formulation,
the bound $[L_{T_N,1},\dots,L_{T_N,n}]$ on the number of complex solutions
can be attained by just real solutions.
That is, the algebraization~\eqref{equ:sync-laurent} of~\eqref{equ:sync-sin}
and the extension to the field of complex numbers does not significantly alter
the geometry of the underlying problem.

For a vertex $i$ in $T_N$, let $\pi(i)$ be the unique parent vertex of $i$, let
$\sigma(i)$ be the set of all descendant nodes of $i$,
and let $d(i)$ be the depth of the vertex $i$.

\begin{lemma}\label{lem:tree-transform}
    The map $\phi = (\phi_1,\dots,\phi_n) : {(\C^*)}^n \to {(\C^*)}^n$ given by
    \begin{equation*}
        \phi_i(y_1,\dots,y_n) = y_i \, \prod_{k=1}^{d(i)-1} y_{\pi^k(i)}
        \quad\text{for } i=1,\dots,n
    \end{equation*}
    is a bijection, and the Jacobian matrix $D\phi$ is nonsingular everywhere.
\end{lemma}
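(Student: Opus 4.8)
The plan is to recognize $\phi$ as a monomial map ${(\C^*)}^n \to {(\C^*)}^n$ and to show that the associated integer matrix is unimodular, after which Lemma~\ref{lem:toric-automorphism} immediately gives that $\phi$ is an automorphism (hence a bijection) and a bi-holomorphism (hence $D\phi$ nonsingular everywhere). Concretely, write $\phi_i(\mathbf{y}) = \mathbf{y}^{\mathbf{c}_i}$ where $\mathbf{c}_i = \mathbf{e}_i + \sum_{k=1}^{d(i)-1} \mathbf{e}_{\pi^k(i)} = \sum_{j \text{ on the path from } i \text{ to the root (excluding the root)}} \mathbf{e}_j$. So the exponent matrix $C = [\mathbf{c}_1,\dots,\mathbf{c}_n]$ has $(j,i)$ entry equal to $1$ if $j$ is an ancestor of $i$ or $j=i$, and $0$ otherwise.

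The heart of the argument is that $|\det C| = 1$. First I would order the vertices $1,\dots,n$ by a topological (pre-order) traversal of the tree, so that every vertex appears after all of its ancestors. With respect to this ordering, $j$ being an ancestor of $i$ forces $j < i$, so the matrix $C$ becomes \emph{upper triangular}. Its diagonal entries $C_{ii}$ are all $1$ (the "$j=i$" case), so $\det C = 1$ outright. This makes $C$ unimodular, and Lemma~\ref{lem:toric-automorphism} finishes everything: $\mathbf{y} \mapsto \mathbf{y}^C$ is an automorphism of ${(\C^*)}^n$ that is a bi-holomorphism, so in particular it is a bijection and its Jacobian is nonsingular at every point.

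I do not anticipate a serious obstacle; the only thing requiring care is being precise about the ancestor/descendant bookkeeping. The notation $\pi^k(i)$ denotes the $k$-fold parent, and one should check that $\{\pi^k(i) : 1 \le k \le d(i)-1\}$ is exactly the set of proper ancestors of $i$ other than the root (the root is vertex $0$, which is not among the variables $y_1,\dots,y_n$, so it correctly does not appear). One should also note that $d(i) \ge 1$ for all $i \in \{1,\dots,n\}$ since the root has depth $0$, so each product is over a well-defined (possibly empty) index set and $\mathbf{c}_i$ is a genuine $0/1$ vector with $i$-th coordinate $1$. Once the topological ordering is fixed and the triangularity of $C$ is observed, the computation of the determinant is immediate and the lemma follows.
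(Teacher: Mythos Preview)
Your proposal is correct and follows essentially the same approach as the paper: re-index the vertices so that every vertex comes after its ancestors, observe that the exponent matrix of the monomial map $\phi$ is then upper triangular with $1$'s on the diagonal (hence unimodular), and invoke Lemma~\ref{lem:toric-automorphism}. The paper additionally writes down the inverse $\psi(\mathbf{x}) = \mathbf{x}^{A^{-1}}$ explicitly, but this is not needed once the lemma is applied.
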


\begin{proof}
    A tree, by definition, has no cycles, so it is always possible
    to re-index the vertices such that vertex 0 is the root
    and $\pi(i) < i$ for any $i$.
    With this convention, we can write $\phi$ as $\phi(\mathbf{y}) = y^A$
    where $y=(y_1,\dots,y_n)$, and $A$ is an $n \times n$ upper triangular
    integer matrix with all diagonal entries being 1.
    Then $A$ is a unimodular matrix and hence $A^{-1}$ is also a unimodular
    integer matrix.
    It is easy to verify that $\psi(\mathbf{x}) = \mathbf{x}^{A^{-1}}$
    is an inverse of $\phi$, and therefore they are both bijections.
    Moreover, since $\det A = 1$, by Lemma~\ref{lem:toric-automorphism},
    $D\phi(\mathbf{y})$ is nonsingular for all $\mathbf{y} \in {(\C^*)}^n$.
\end{proof}

Being a bijection, the transformation $\phi$ given in Lemma~\ref{lem:tree-transform}
preserves the solution count of any system of equations.
Moreover, since $D\phi$ remains nonsingular on ${(\C^*)}^n$, $\phi$ also preserves
the more subtle local structures at each solution including multiplicities
and local dimensions.

\begin{theorem}
    For a tree graph $T_N$ consisting of $N$ nodes, the Adjacency Polytope bound
    of the induced algebraic system~\eqref{equ:sync-laurent}
    is $2^{N-1}$.
\end{theorem}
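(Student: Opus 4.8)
The plan is to compute $\NVol(\nabla_{T_N})$ directly by exploiting the fact that the adjacency polytope of a tree decomposes as an iterated free sum. First I would use Lemma~\ref{lem:tree-transform}: the monomial change of coordinates $\phi(\mathbf{y}) = \mathbf{y}^A$ is a unimodular automorphism of ${(\C^*)}^n$, hence it acts on exponent vectors by the invertible integer map $\mathbf{a} \mapsto A^{\top}\mathbf{a}$ (or its transpose-inverse, depending on the convention), and such a map preserves normalized volumes of lattice polytopes. So $\NVol(\nabla_{T_N}) = \NVol(A^{\top}\nabla_{T_N})$, and it suffices to understand the image of the generating set $\{\pm(\mathbf{e}_i - \mathbf{e}_j) \mid (i,j)\in E(T_N)\}$ under this transformation. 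The point of the re-indexing in Lemma~\ref{lem:tree-transform} (root $=0$, $\pi(i)<i$) is that each edge of the tree is of the form $(\pi(i),i)$, and I expect the transformation to send $\pm(\mathbf{e}_i - \mathbf{e}_{\pi(i)})$ to $\pm\mathbf{e}_i$ (with $\pm\mathbf{e}_i$ for edges incident to the root mapping to $\pm\mathbf{e}_i$ as well, since $x_0=1$).

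Granting that, the transformed adjacency polytope is $\conv(\{\pm\mathbf{e}_1,\dots,\pm\mathbf{e}_n\})$, the standard cross-polytope $\Diamond_n$ in $\R^n$. Its normalized volume is the classical value $2^n$: indeed $\Diamond_n = \Diamond_1 \oplus \cdots \oplus \Diamond_1$ ($n$-fold free sum of the segment $[-1,1]$), each summand is reflexive, so Lemma~\ref{lem:free sum-vol} applies inductively to give $\NVol(\Diamond_n) = \NVol([-1,1])^n = 2^n = 2^{N-1}$. (Alternatively one can triangulate the cross-polytope into its $2^n$ orthant simplices, each unimodular, but the free-sum route is cleaner and matches the machinery set up in the preliminaries.) This completes the computation since normalized volume is a unimodular invariant.

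The steps in order are: (1) fix the convenient vertex ordering and write $\phi$ in the matrix form $\mathbf{y}^A$ from Lemma~\ref{lem:tree-transform}; (2) track how $A$ acts on exponent vectors and verify that each generator $\pm(\mathbf{e}_i - \mathbf{e}_j)$ of $\nabla_{T_N}$, for an edge $(i,j)=(\pi(i),i)$, is carried to $\pm\mathbf{e}_i$ — equivalently, that the columns of $A^{-1}$ (or the relevant difference vectors) are exactly the standard basis vectors up to sign, which follows because $A$ is unitriangular with the tree's parent structure encoded in its support; (3) identify the image polytope as the cross-polytope $\Diamond_n$; (4) apply Lemma~\ref{lem:free sum-vol} inductively (or a unimodular triangulation) to get $\NVol(\Diamond_n)=2^n$; (5) conclude $\NVol(\nabla_{T_N}) = 2^{N-1}$ by invariance of normalized volume under the unimodular map.

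The main obstacle is step~(2): one must check carefully that the monomial substitution really straightens every edge difference $\mathbf{e}_i - \mathbf{e}_{\pi(i)}$ into a single coordinate vector, and in particular that the telescoping product defining $\phi_i$ interacts correctly with the root constraint $x_0 = 1$ and with edges where the parent is $0$. This is essentially a bookkeeping argument about the inverse of a unitriangular matrix whose off-diagonal structure is the ancestor relation of the tree, but it is the one place where the combinatorics of the tree genuinely enters, so it deserves an explicit verification (perhaps by induction on depth, showing $\phi_i/\phi_{\pi(i)} = y_i$ so that the exponent of the edge $(\pi(i),i)$ collapses to $\mathbf{e}_i$). Everything after that is a citation of Lemma~\ref{lem:free sum-vol} together with reflexivity of the interval $[-1,1]$.
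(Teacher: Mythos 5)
Your proposal is correct and follows essentially the same route as the paper: apply the unimodular monomial substitution from Lemma~\ref{lem:tree-transform} to collapse each edge difference $\pm(\mathbf{e}_i - \mathbf{e}_{\pi(i)})$ to $\pm\mathbf{e}_i$ (via the telescoping $\phi_i/\phi_{\pi(i)} = y_i$), recognize the transformed adjacency polytope as the cross-polytope, and compute its normalized volume as $2^n$ by a free-sum argument with Lemma~\ref{lem:free sum-vol}. The step you flag as the main obstacle is exactly the one-line telescoping computation the paper carries out explicitly.
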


This result agrees with the general analysis from recent studies
\cite{dekker_synchronization_2013,dorfler_synchronization_2014}.
A similar result for the root counting problem for power-flow equations
has been developed in~\cite{Guo1990}.

\begin{proof}
    Let $F_{T_N}(\mathbf{x}) = F_{T_N}(x_1,\dots,x_n)$ be the
    algebraic system~\eqref{equ:sync-laurent} induced by
    the tree graph $T_N$.
    Then each non-constant monomial in $F_{T_N}(\mathbf{x})$ must be of the form
    $x_i x_{\pi(i)}^{-1}$ or $x_i^{-1} x_{\pi(i)}$ for some $i \in \{1,\dots,n\}$.
    With the substitution given by $x_i = \phi_i(y_1,\dots,y_n)$ for $i=1,\dots,n$,
    as defined in the previous lemma, it is easy to verify that
    \[
        x_i x_{\pi(i)}^{-1} =
        \left(
            y_i \, \prod_{k=1}^{d(i)-1} y_{\pi^k(i)}
        \right)
        \left(
            y_{\pi(i)}^{-1} \, \prod_{k=1}^{d(\pi(i))-1} y_{\pi^k(\pi(i))}^{-1}
        \right)
        = y_i.
    \]
    Therefore the set of monomials which appear in $F_{T_N}(\phi(\mathbf{y}))$
    is exactly the set $\{1\} \cup \{ y_1,\dots,y_n \} \cup \{ y_1^{-1},\dots,y_n^{-1} \}$.
    Under the same transformation, the Adjacency Polytope becomes the cross-polytope
    \[
        \conv \left( \bigcup_{i=1}^n \conv( \{\pm \mathbf{e}_i\} ) \right),
    \]
    which is a free sum of the $n$ line segments
    $\conv( \{\pm \mathbf{e}_i\} )$ for $i=1,\dots,n$.
    By Lemma~\ref{lem:free sum-vol}, the normalized volume of this polytope
    is the product of the normalized volume of each of the summands.
    Since each line segment is of length 2,
    the AP bound is therefore $2^n = 2^{N-1}$.
\end{proof}

We now show the AP bound is actually attainable.
That is, there exist choices of complex values for
$\{a_{ij}'\}$ and $\omega_1,\dots,\omega_n$ in~\eqref{equ:sync-laurent}
for which the system has exactly $2^n$ isolated $\C^*$-solutions.

\begin{lemma}\label{lem:tree-reduction}
    For the tree graph $T_N$ containing $N = n+1$ vertices,
    the induced algebraic system~\eqref{equ:sync-laurent}
    is equivalent to the system
    \begin{equation}
        \omega^*_i
        -
        a_{i,\pi(i)}' \,
        \left( \frac{x_i}{x_{\pi(i)}} - \frac{x_{\pi(i)}}{x_i} \right)
        = 0
        \quad \text{ for } i = 1,\dots,n
        \label{equ:tree-reduced}
    \end{equation}
    for some complex constants $\omega^*_1,\dots,\omega^*_n$.
\end{lemma}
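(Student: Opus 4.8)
The plan is to derive the reduced system~\eqref{equ:tree-reduced} from~\eqref{equ:sync-laurent} by a sequence of elementary row operations — adding constant multiples of one equation to another — organized according to the tree structure, and then to observe that this sequence of operations is an invertible $\C$-linear change of the system. First I would fix the root $0$ and re-index the remaining vertices as $1,\dots,n$ so that $\pi(i) < i$ for every $i$ (a topological order, ancestors before descendants); equivalently, I will process vertices in order of decreasing depth. Since $T_N$ is a tree, the neighbor set of a vertex $i \neq 0$ splits as $\{\pi(i)\}$ together with the children of $i$, so the $i$-th equation of~\eqref{equ:sync-laurent} is
\[
    F_{T_N,i} = \omega_i - a'_{i,\pi(i)}\!\left(\frac{x_i}{x_{\pi(i)}} - \frac{x_{\pi(i)}}{x_i}\right) + \sum_{c\,:\,\pi(c)=i} a'_{i,c}\!\left(\frac{x_c}{x_i} - \frac{x_i}{x_c}\right),
\]
i.e.\ it carries one ``parent term'' and, for each child $c$, one ``child term'' built from the monomial pair $x_c/x_i - x_i/x_c$.

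The key step is an induction on depth showing that after processing, the $i$-th equation becomes $\omega^*_i - a'_{i,\pi(i)}(x_i/x_{\pi(i)} - x_{\pi(i)}/x_i) = 0$. For the base case, a vertex at maximal depth has no children, so its equation is already in this form with $\omega^*_i = \omega_i$. For the inductive step, when I reach a vertex $i$ every child $c$ of $i$ has strictly larger depth, hence has already been reduced to $\omega^*_c - a'_{c,i}(x_c/x_i - x_i/x_c) = 0$ (note $\pi(c)=i$). Adding $\lambda_c := a'_{i,c}/a'_{c,i}$ times this reduced $c$-th equation to $F_{T_N,i}$ cancels the child-$c$ term — here I use the standing convention that each edge carries a nonzero coupling weight, so $a'_{c,i}\neq 0$ — and carrying this out for all children of $i$ leaves precisely $\omega^*_i - a'_{i,\pi(i)}(x_i/x_{\pi(i)} - x_{\pi(i)}/x_i) = 0$ with $\omega^*_i = \omega_i + \sum_{c\,:\,\pi(c)=i} \lambda_c\,\omega^*_c$.

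Finally I would establish equivalence in the sense used earlier in this section. Unrolling the recursion, the reduced $i$-th equation equals $F_{T_N,i}$ plus a $\C$-linear combination of the equations $F_{T_N,j}$ with $j$ a proper descendant of $i$; hence the whole reduction is $M \cdot F_{T_N}$ for a matrix $M \in \C^{n \times n}$ that, in the chosen ordering, is upper triangular with all diagonal entries equal to $1$. Thus $\det M = 1$, so $M$ is invertible, and an invertible constant matrix applied to a polynomial system preserves the generated ideal, hence the set of $(\C^*)^n$-solutions together with all multiplicities and local dimensions. This gives the claimed equivalence.

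I expect the main obstacle to be purely organizational rather than substantive: arranging the depth induction so that ``all children already reduced'' is available exactly when vertex $i$ is processed, and being careful that in the possibly non-symmetric model $a'_{i,c}$ and $a'_{c,i}$ are distinct coefficients attached to the same monomial pair $x_c/x_i - x_i/x_c$ — which is precisely why the multiplier $\lambda_c = a'_{i,c}/a'_{c,i}$ is needed and why nonvanishing of edge weights enters.
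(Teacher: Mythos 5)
Your proof is correct and takes essentially the same approach as the paper: both eliminate the child terms from each parent's equation by adding a suitable multiple (with ratio $a'_{i,c}/a'_{c,i}$, using nonvanishing of the coupling weights) of the child's equation, working from the leaves toward the root. The paper packages this as an induction on $N$ that peels off one leaf at a time and invokes the induction hypothesis on the smaller tree, whereas you run a depth induction eliminating all children of a vertex at once and additionally make the equivalence explicit via the invertible upper-triangular matrix $M$ with unit diagonal --- a nice formalization of what the paper leaves implicit, but not a different route.
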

Here, the equivalence means the two systems have the same solution set in $\C^*$.

\begin{proof}
    For $N=2$, the system~\eqref{equ:sync-laurent} contains only one equation
    \[
        \omega_i - a_{1,0}' (x_1 x_0^{-1} - x_1^{-1} x_0) = 0
    \]
    where $x_0 = 1$. The statement is obviously true in this case.

    Now consider a tree $T_N$ consisting of $N = n+1$ nodes and
    assume the statement is true for any tree of smaller sizes.
    Fixing any leaf vertex in the tree, without loss, we can re-index the vertices
    so that this leaf vertex has index $n$ and its unique parent vertex is $n-1$.
    In this arrangement, the $n$-th (last) equation in~\eqref{equ:sync-laurent} is
   \begin{equation}
       \omega_n - a_{n,n-1}' (x_n x_{n-1}^{-1} - x_{n-1} x_n^{-1}) = 0,
       \label{equ:leaf-x}
   \end{equation}
    while the $(n-1)$-th equation is
   \begin{equation}
       \omega_{n-1} -
       a_{n-1,n}' (x_{n-1} x_n^{-1} - x_n x_{n-1}^{-1}) -
       \sum_{j \in \mathcal{N}(n-1) \setminus \{n\}}
       a_{n-1,j}' (x_{n-1} x_j^{-1} - x_j x_{n-1}^{-1})
       = 0.
       \label{equ:leaf-parent}
   \end{equation}
    Then adding $a_{n-1,n}'/a_{n,n-1}'$ times~\eqref{equ:leaf-x}
    to~\eqref{equ:leaf-parent} produces
   \begin{equation*}
       \left(\omega_{n-1} + \frac{a_{n-1,n}'}{a_{n,n-1}'} \omega_n \right)
       -
       \sum_{j \in \mathcal{N}(n-1) \setminus \{n\}}
       a_{n-1,j}' \left(\frac{x_{n-1}}{x_j} - \frac{x_j}{x_{n-1}} \right)
       = 0.
   \end{equation*}
   With this transformation, the first $n-1$ equations do not involve $x_n$
   and form a smaller algebraic system induced by a tree graph
   consisting of $n$ vertices $0,1,\dots,n-1$.
   By the induction hypothesis, this smaller system can be transformed into
   the desired form given in~\eqref{equ:tree-reduced} without altering
   the solution set.
   By induction, the statement is true for all tree graphs.
\end{proof}

\begin{lemma}
    Given a tree graph $T_N$ containing $N$ vertices, there exist choices of
    complex valued weights ${\{a_{ij}'\}}_{(i,j) \in E(T_N)}$
    and complex constants $\omega_1,\dots,\omega_n$,
    such that the induced system $F_{T_N}(\mathbf{x}) = \mathbf{0}$
    has exactly $2^{N-1}$ nonsingular isolated $\C^*$-solutions.
\end{lemma}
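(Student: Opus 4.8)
The plan is to decouple the system completely by composing the two reductions already in hand: the constant-coefficient row operations of Lemma~\ref{lem:tree-reduction} and the monomial change of coordinates $\phi$ of Lemma~\ref{lem:tree-transform}. First I would fix arbitrary nonzero edge weights $\{a'_{ij}\}$ for $T_N$ (for instance all equal to $1$). By Lemma~\ref{lem:tree-reduction}, for every choice of the frequencies the system~\eqref{equ:sync-laurent} has the same $\C^*$-solution set as the reduced system~\eqref{equ:tree-reduced} for suitable constants $\omega^*_1,\dots,\omega^*_n$; tracing the inductive construction in that proof, the correspondence $(\omega_1,\dots,\omega_n)\mapsto(\omega^*_1,\dots,\omega^*_n)$ is linear and, in an appropriate ordering of the vertices, triangular with $1$'s on the diagonal, hence a bijection. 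So it suffices to pick a convenient $(\omega^*_1,\dots,\omega^*_n)$ and count the $\C^*$-solutions of~\eqref{equ:tree-reduced}.

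Next I would apply the substitution $x_i=\phi_i(\mathbf y)$ of Lemma~\ref{lem:tree-transform}. As observed in the proof of the Adjacency Polytope theorem above, this substitution yields $x_i x_{\pi(i)}^{-1}=y_i$, so~\eqref{equ:tree-reduced} turns into the \emph{decoupled} system $\omega^*_i - a'_{i,\pi(i)}(y_i - y_i^{-1}) = 0$ for $i=1,\dots,n$, that is, $n$ independent quadratics $a'_{i,\pi(i)}\,y_i^2 - \omega^*_i\,y_i - a'_{i,\pi(i)}=0$. In each of them the product of the two roots is $-1$, so both roots automatically lie in $\C^*$, and they are distinct exactly when $(\omega^*_i)^2 \neq -4(a'_{i,\pi(i)})^2$, i.e.\ $\omega^*_i \neq \pm 2\imag\,a'_{i,\pi(i)}$. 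Choosing each $\omega^*_i$ outside these two values --- possible because $a'_{i,\pi(i)}\neq 0$ --- the decoupled system has exactly $2^n$ solutions in $(\C^*)^n$. Its Jacobian is the diagonal matrix with $i$-th entry $-a'_{i,\pi(i)}(1+y_i^{-2})$, which vanishes only at $y_i=\pm\imag$; but $y_i=\pm\imag$ solves the $i$-th quadratic only when $\omega^*_i=\pm 2\imag\,a'_{i,\pi(i)}$, which we excluded. Hence all $2^n$ solutions are nonsingular, and in particular isolated.

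Finally I would transport the count back. Since $\phi$ is a bijection of $(\C^*)^n$ with everywhere-nonsingular Jacobian (Lemma~\ref{lem:tree-transform}), the reduced system~\eqref{equ:tree-reduced} with our chosen $\omega^*_i$ also has exactly $2^n$ nonsingular isolated $\C^*$-solutions; and because the reduction of Lemma~\ref{lem:tree-reduction} is nothing but left multiplication of the equation vector by a constant invertible matrix $M$, we have $DF_{T_N}=M\cdot D(\text{reduced system})$, so the original system~\eqref{equ:sync-laurent} with the corresponding weights and frequencies $\omega_i$ has the same property. As $2^n=2^{N-1}$, this proves the lemma; combined with the upper bound $2^{N-1}$ furnished by the Adjacency Polytope theorem and~\eqref{equ:comparison}, it will also pin down $[L_{T_N,1},\dots,L_{T_N,n}]$. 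The only real work is the bookkeeping --- checking that the composite of the two transformations decouples every edge term at once and that both steps preserve nonsingularity and isolatedness --- and this is immediate since one step is a constant linear change of equations and the other a biholomorphic monomial change of coordinates.
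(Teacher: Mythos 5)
Your proof is correct and follows the same strategy as the paper: apply the row reduction of Lemma~\ref{lem:tree-reduction}, then the monomial change of variables $\phi$ of Lemma~\ref{lem:tree-transform} to decouple the system into $n$ independent quadratics, and count. You are in fact somewhat more careful than the published argument, which leans on a loose ``generic choice of coefficients'' claim; your explicit verification that the $\omega\mapsto\omega^*$ map is a triangular bijection, that both roots of each quadratic lie in $\C^*$ because their product is $-1$, and that the Jacobian is nonvanishing away from $\omega^*_i=\pm 2\imag\,a'_{i,\pi(i)}$ closes the genericity issue that the paper itself flags in the remark following Theorem~\ref{cor:index-tree}.
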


\begin{proof}
    By Lemma~\ref{lem:tree-reduction}, the induced algebraic system
    $F_T(\mathbf{x})$ is equivalent to~\eqref{equ:tree-reduced}.
    Under the transformation $\mathbf{x} = \phi(\mathbf{y})$ given in
    Lemma~\ref{lem:tree-transform}, $F_{T_N}(\phi(\mathbf{y}))$ is
    \begin{equation}
        \omega^*_i -
        a_{i,\pi(i)}' \,
        ( y_i - y_i^{-1} )
        = 0
        \quad \text{ for } i = 1,\dots,n
        \label{equ:tree-reduced-y}
    \end{equation}
    which has the same number of isolated nonsingular solutions in ${(\C^*)}^n$
    as the original system.
    Concerning $\C^*$-solutions, the $i$-th equation in the above system is
    equivalent to the quadratic equation
    \[
        \omega^*_i
        -
        a_{i,\pi(i)}' \, y_i^2
        +
        a_{i,\pi(i)}'
        = 0
    \]
    which has exactly two $\C^*$-solutions for generic choice of coefficients
    (even if we require $a_{ij}' = a_{ji}'$).
    Since there are $n$ independent quadratic equations in $y_1,\dots,y_n$ respectively,
    the generic root count for~\eqref{equ:tree-reduced-y} is exactly $2^n = 2^{N-1}$.
    Consequently the $\C^*$-solution count of the original system $F_{T_N}$
    can also reach $2^{N-1}$.
\end{proof}

\begin{theorem}\label{cor:index-tree}
    Given a tree graph $T_N$ containing $N = n+1$ vertices,
    let $L_{T_N,1},\dots,L_{T_N,n}$ be the subspace of rational functions
    defined in~\eqref{equ:L-space}.
    Then
    \[
        [\, L_{T_N,1}, \dots, L_{T_N,n}\, ] = 2^n = 2^{N-1}
    \]
\end{theorem}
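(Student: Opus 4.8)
The plan is to trap $[L_{T_N,1},\dots,L_{T_N,n}]$ between an upper and a lower bound that have both already been produced, and to observe that they agree. For the upper bound I would simply read off the chain of inequalities~\eqref{equ:comparison}: the intersection index is dominated by the BKK bound, which in turn is dominated by the adjacency polytope bound, and the latter has just been computed to equal $2^{N-1}$ for $T_N$ (the adjacency polytope of a tree becomes, after the monomial change of coordinates, a cross-polytope realized as a free sum of $n$ segments of length $2$, whose normalized volume factors as $2^n$ by Lemma~\ref{lem:free sum-vol}). Hence $[L_{T_N,1},\dots,L_{T_N,n}] \le 2^{N-1}$.

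For the reverse inequality I would use the key feature of the birationally invariant intersection index: it is the generic number of isolated common roots in ${(\C^*)}^n$ of $n$ elements chosen from $L_{T_N,1},\dots,L_{T_N,n}$, and no particular choice of those elements --- equivalently, no particular choice of coupling weights $a_{ij}'$ and constants $\omega_i$ in~\eqref{equ:sync-laurent} --- can exhibit more isolated ${(\C^*)}^n$-solutions than this generic number. The lemma proved just above produces explicit complex weights and constants for which the induced system has exactly $2^{N-1}$ nonsingular isolated ${(\C^*)}^n$-solutions; therefore $[L_{T_N,1},\dots,L_{T_N,n}] \ge 2^{N-1}$. Combining the two bounds gives $[L_{T_N,1},\dots,L_{T_N,n}] = 2^n = 2^{N-1}$.

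Since every genuinely computational step has been absorbed into the earlier lemmas, there is no real obstacle left; the only point that warrants care is the logical direction of the lower-bound argument. One must invoke that the intersection index bounds the solution count of \emph{every} admissible specialization of the coefficients, not merely that it equals the count of a generic one --- precisely the reading of the middle inequality of~\eqref{equ:comparison} against the definition of the ${(\C^*)}^n$-solution count as a maximum over all admissible weights. It is also worth recording, as the surrounding discussion does, that the $2^{N-1}$ solutions witnessing the lower bound can be taken to be real, so for tree graphs the entire chain~\eqref{equ:comparison} collapses to a single value and the passage to the complex algebraic model costs nothing.
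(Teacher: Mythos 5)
Your argument is essentially identical to the paper's: trap $[\,L_{T_N,1},\dots,L_{T_N,n}\,]$ between the $\C^*$-solution count (shown attainable at $2^{N-1}$ by the preceding lemma) and the AP bound (computed to be $2^{N-1}$), and invoke the chain~\eqref{equ:comparison} to force equality. Your added emphasis on reading the middle inequality of~\eqref{equ:comparison} in the right direction --- the index dominates \emph{every} admissible specialization, not merely the generic one --- is correct and precisely the point the paper leans on.
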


\begin{proof}
    By~\eqref{equ:comparison}, $[L_{T_N,1},\dots,L_{T_N,n}]$ is trapped
    in between the $\C^*$-solution count of~\eqref{equ:sync-laurent}
    and its AP bound.
    We have shown both to be $2^{N-1}$.
    Therefore we can conclude $[ L_{T_N,1}, \dots, L_{T_N,n}] = 2^{N-1}$.
\end{proof}

    By carefully keeping track of the transformation of coefficients,
    it is possible to argue that the ``generic solution count''
    for~\eqref{equ:tree-reduced-y} and the original system~\eqref{equ:sync-laurent}
    are actually the same,
    thereby establishing Corollary~\ref{cor:index-tree} directly.
    However, as noted in Remark~\ref{rmk:generic}, the concept of
    ``generic coefficients'' is more subtle than it may appear.
    Therefore, here we prefer the straightforward calculation of the AP bound
    over such genericity argument.
\section{Cycle Graphs}\label{sec:cycle}

In the study of the Kuramoto model, cycle graphs may be considered as basic
building blocks as recent works suggests that it is plausible that
detailed analysis of the local geometry near equilibria can be done on a
cycle-by-cycle basis~\cite{Bronski2015}.
In the context of power-flow study, the analysis of the Kuramoto model on
cycle graphs is also of great practical importance~\cite{xi2017synchronization}.

For a cycle graph $C_N$ of $N = n + 1$ vertices (labeled by $\{0,\dots,n\}$),
we shall show the intersection index $[L_{C_N,1},\dots,L_{C_N,n}]$ is
$(n+1)\binom{n}{\lfloor n/2 \rfloor}$.
Following the same strategy used in the previous section,
we first compute the AP bound for the cycle graph $C_N$.
Then we show there is no gap between $[L_{C_N,1},\dots,L_{C_N,n}]$ and the AP bound.

The set of edges is $E(C_N) = \{(0,1),(1,2),\dots,(n-1, n),(n,0)\}$.
The induced adjacency polytope (Definition~\ref{def:adj-polytope}) is
\[
    \nabla_{C_N} = \conv \{\pm(\mathbf{e}_i - \mathbf{e}_j) \mid (i,j) \in E(C_N)\},
\]
where $\mathbf{e}_0 = (0,\dots,0)$ as before.
The AP bound for $F_{C_N}$ is defined to be the normalized volume of $\nabla_{C_N}$;
thus, the first goal of this section will be to identify this normalized volume.
It will be simplest to first notice that $\nabla_{C_N}$ is unimodularly equivalent
to the polytope
\[
    P_N = \conv \{
        \pm \mathbf{e}_1 \, , \,\dots \, , \, \pm \mathbf{e}_n\,,\;
        \pm (\mathbf{e}_1 + \cdots + \mathbf{e}_n)
        \}.
\]
Such an equivalence can be seen by applying the normalized volume-preserving transformation
given by
\[
    \begin{bmatrix}
        1 & 0 & 0 & \cdots & 0 \\
        1 & 1 & 0 & \cdots & 0 \\
        1 & 1 & 1 & \cdots & 0 \\
        \vdots & \vdots & \vdots & \ddots & \vdots \\
        1 & 1 & 1 & \cdots & 1 \\
    \end{bmatrix}
\]
to each vertex of $\nabla_{C_N}$.
One reason that this is desirable is that it becomes clear $P_N$ is totally unimodular,
that is, the matrix formed by placing the vertices of $P_N$ as the columns is a
totally unimodular matrix.
Since $\mathbf{0}$ is the average of all vertices of $P_N$,
it is an interior point of $P_N$.
Thus, a unimodular triangulation of the boundary of $P_N$ will induce a
unimodular triangulation of $P_N$ itself, where the simplices are of the form $\conv\{\mathbf{0} \cup \Delta \}$ where $\Delta$ is a simplex in the triangulation of the boundary.
This will be our strategy, since the number of simplices in a unimodular triangulation of a polytope
is identical to the normalized volume of the polytope.


When $N$ is odd, then $P_N$ is called a \emph{del Pezzo} polytope.
In this case, it is known~\cite{Nill} that $P_N$ is simplicial,
that is, every facet is a simplex.
Together with $P_N$ being totally unimodular, its normalized volume is therefore
equal to the number of its facets, which was shown to be $N\binom{N-1}{(N-1)/2}$.
So, we only need to consider when $N$ is even and would like to obtain an analogous formula.

\begin{proposition}
    For even $N$, let $\Lambda_n \subseteq {\{-1,1\}}^n$ be the set of sequences
    $\boldsymbol{\lambda} = (\lambda_1, \dots, \lambda_n)$ such that
    $\sum_{i=1}^n \lambda_i = 1$.
    The facets of $P_N$ are then
    \[
        \mathcal{F}(P_N) =
        \left\{
            \pm \conv \left\{
                \lambda_1 \mathbf{e}_1, \dots, \lambda_n \mathbf{e}_n,
                \sum_{i=1}^n \mathbf{e}_i
            \right\}
            \mid (\lambda_1,\dots,\lambda_n) \in \Lambda_n
        \right\}
    \]
\end{proposition}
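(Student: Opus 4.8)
The plan is to determine all facets of $P_N = \conv\{\pm\mathbf{e}_1,\dots,\pm\mathbf{e}_n,\pm(\mathbf{e}_1+\cdots+\mathbf{e}_n)\}$ by classifying the supporting hyperplanes. Since $P_N$ is a centrally symmetric polytope (its vertex set is stable under $\mathbf{v}\mapsto-\mathbf{v}$), facets come in antipodal pairs, so it suffices to describe facets of the form $\{\mathbf{x}\in P_N : \langle\mathbf{c},\mathbf{x}\rangle = 1\}$ for a suitable normal vector $\mathbf{c}\in\R^n$, with all $2n+2$ vertices satisfying $\langle\mathbf{c},\cdot\rangle\le 1$. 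A facet is the convex hull of the vertices lying on its supporting hyperplane, and since $P_N$ is full-dimensional and (as noted) totally unimodular, a facet supported by $\mathbf{c}$ with $\langle\mathbf{c},\mathbf{v}\rangle=1$ must contain $n$ affinely independent vertices.

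First I would set up the linear inequalities a facet normal $\mathbf{c}=(c_1,\dots,c_n)$ must satisfy: $|c_i|\le 1$ for each $i$ (from $\pm\mathbf{e}_i$), and $|c_1+\cdots+c_n|\le 1$ (from $\pm\sum\mathbf{e}_i$), with equality in exactly the constraints indexing vertices on the facet. For a candidate facet $\conv\{\lambda_1\mathbf{e}_1,\dots,\lambda_n\mathbf{e}_n,\sum_i\mathbf{e}_i\}$ with $\boldsymbol\lambda\in\Lambda_n$, the normal is forced: $\langle\mathbf{c},\lambda_i\mathbf{e}_i\rangle=1$ gives $c_i=\lambda_i$, and one checks $\langle\mathbf{c},\sum_i\mathbf{e}_i\rangle=\sum_i\lambda_i=1$ automatically by the defining condition of $\Lambda_n$. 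So the $n+1$ listed points are coplanar on $\{\langle\boldsymbol\lambda,\mathbf{x}\rangle=1\}$, they are affinely independent (their difference vectors from $\sum\mathbf{e}_i$ are $(\lambda_i-1)\mathbf{e}_i$ with $\lambda_i-1\in\{-2,0\}$; since $\sum\lambda_i=1$ with each $\lambda_i=\pm 1$ and $n$ even forces an odd number of $+1$'s, in particular not all $\lambda_i=1$, and a short count shows exactly $(n/2)$ of them equal $-1$, yielding $n$ nonzero difference vectors — wait, that gives only $n/2$; I must instead include $\lambda_i\mathbf{e}_i$ for the $+1$ indices too, and these together with the $-1$ ones and $\sum\mathbf{e}_i$ do span an $(n-1)$-flat, which I would verify directly). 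Then I would confirm all remaining vertices satisfy $\langle\boldsymbol\lambda,\cdot\rangle<1$ strictly — i.e. $|\lambda_k|=1\le 1$ holds with equality only at the chosen $\lambda_k\mathbf{e}_k$, and $-\sum\mathbf{e}_i$ gives $-1<1$ — so each such set is genuinely a facet.

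Conversely, I would show every facet arises this way. Given a facet with normal $\mathbf{c}$, the tightness conditions $\pm c_i\le 1$ and $\pm\sum c_i\le 1$ must include at least $n$ linearly independent equalities among $c_i=\pm 1$ and $\sum c_i=\pm 1$. The cleanest argument: a facet has $n$ affinely independent vertices, each of which is some $\pm\mathbf{e}_i$ or $\pm\sum\mathbf{e}_j$; analyze which multisets of $n$ such vectors are affinely independent and coplanar on a valid supporting hyperplane, and show the only possibilities are (up to sign) $\{\lambda_i\mathbf{e}_i\}\cup\{\sum\mathbf{e}_j\}$ with $\boldsymbol\lambda\in\Lambda_n$. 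The main obstacle I anticipate is this converse count — specifically, ruling out facets not containing $\pm\sum\mathbf{e}_i$ and handling the parity: one must use that $N$ even (hence $n$ odd... no, $n=N-1$ odd) — careful, $N$ even means $n$ odd, but $\Lambda_n$ requires $\sum\lambda_i=1$ with $n$ terms from $\{\pm1\}$, which needs $n$ odd, consistent. The parity argument (an $(n-1)$-subset of $\{\pm\mathbf{e}_i\}$ spans only an $(n-1)$-flat through $\mathbf{0}$, which cannot support a facet since $\mathbf{0}$ is interior) shows every facet must contain $\pm\sum\mathbf{e}_i$, after which the normal is pinned down to have all $|c_i|=1$ and the claim follows.
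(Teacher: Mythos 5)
Your approach is essentially the same as the paper's: both you and the authors classify facets by their supporting hyperplanes (you via the normal vector $\mathbf{c}$, the paper via a linear functional $\ell$), verify that each set in $\mathcal{F}(P_N)$ is supported by such a hyperplane with all remaining vertices strictly inside, and then argue that no other subset of vertices yields a facet. No new decomposition or lemma is introduced.

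That said, the proposal as written has two genuine gaps. First, your affine-independence check in the forward direction is broken: you assert the $n+1$ points are affinely independent, which is false (the facets of $P_N$ for even $N$ are $(n-1)$-dimensional polytopes with $n+1$ vertices, hence not simplices). The computation that follows is also incorrect: the difference $\lambda_j\mathbf{e}_j - \sum_i\mathbf{e}_i$ equals $(\lambda_j-1)\mathbf{e}_j - \sum_{i\neq j}\mathbf{e}_i$, not $(\lambda_j-1)\mathbf{e}_j$, and with $n$ odd and $\sum_i\lambda_i = 1$ the number of $-1$ entries is $(n-1)/2$, not $n/2$. What you actually need, and what the paper uses, is only that the $n$ points $\lambda_1\mathbf{e}_1,\dots,\lambda_n\mathbf{e}_n$ are affinely independent (giving $\dim \ge n-1$) while all $n+1$ points lie on the hyperplane $\langle\boldsymbol\lambda,\cdot\rangle = 1$ (giving $\dim = n-1$).

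Second, the converse direction — every facet has this form — is only sketched, and you yourself flag it as ``the main obstacle.'' The parity observation is the right tool, but it needs to be carried all the way: after showing a facet with vertices only in $\{\pm\mathbf{e}_i\}$ must use exactly $n$ distinct indices (anything fewer cannot span an $(n-1)$-flat), you need that $n$ odd forces $\sum_i\lambda_i$ to be an odd integer, so $|\sum_i\lambda_i| \ge 1$; if $|\sum_i\lambda_i| \ge 3$ the point $\pm\sum_i\mathbf{e}_i$ is outside the halfspace $\langle\boldsymbol\lambda,\cdot\rangle \le 1$ and $\boldsymbol\lambda$ is not a valid normal, while if $\sum_i\lambda_i = \pm 1$ the corresponding $\pm\sum_i\mathbf{e}_i$ lies on the hyperplane and must be included in the facet. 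Finally, ruling out normals with some $|c_{j^*}| < 1$ requires noting that once $c_i = \pm 1$ for $i \neq j^*$, the constraint $\sum_i c_i = \pm 1$ forces $c_{j^*}$ to be an odd integer ($n-1$ is even), which is incompatible with $|c_{j^*}| < 1$. None of this is in the proposal as written.
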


\begin{proof}
    First, observe that the vertices of a facet must consist of a subset of
    \begin{equation}\label{eq: sign pattern}
    	\{
            \lambda_1 \mathbf{e}_1,
            \dots,
            \lambda_n \mathbf{e}_n,
            \lambda_{n+1} ( \mathbf{e}_1 + \cdots + \mathbf{e}_n )
        \}
    \end{equation}
    for some choice of $\lambda_1,\dots,\lambda_{n+1} \in \{-1,1\}$.
    Otherwise, two vertices $\pm \mathbf{v}$ of $P_N$ would be part of a facet,
    which is impossible since the line segment $\conv \{ -\mathbf{v},\mathbf{v}\}$
    passes through the interior of $P_N$.

    Next, note that if $F$ is a facet, then so is $-F$ since $P_N = -P_N$.
    One specific choice of facet is
    \[
        F_0 =
        \conv
        \left \{
            \mathbf{e}_1, \dots, \mathbf{e}_{(n-1)/2},-\mathbf{e}_{(n+1)/2},
            \dots,
            -\mathbf{e}_n, -\sum_{i=1}^n \mathbf{e}_i
        \right \}.
    \]
    To see why this is true, observe that each of the vertices in $F_0$ lies on the hyperplane
    \[
    	\{(x_1\dots,x_n) \in \R^n \mid \ell(x_1,\dots,x_n) = 1\}
    \]
    where
    \[
    	\ell(x_1,\dots,x_n) = \sum_{i = 1}^{(n-1)/2} x_i - \sum_{i = (n+1)/2}^n x_i,
    \]
    and all other vertices $\mathbf{v}$ of $P_N$ satisfy $\ell(\mathbf{v}) = -1$.
    Moreover, the first $n$ vertices defining $F_0$ are clearly affinely independent,
    so $\dim F_0 = n-1$.
    Therefore, $F_0$ is indeed a facet of $P_N$.

    Any other choice of $(\lambda_1,\dots,\lambda_n) \in \Lambda_n$ for the
    elements in $\mathcal{F}(P_N)$ will result in a facet as well,
    since the resulting convex hull is unimodularly equivalent to $F_0$.
    Hence, the same arguments can be applied to these sets.
    It remains to show that no other set of vertices will form a facet.

    Take any element of~\eqref{eq: sign pattern} such that there are $k \geq 2$
    more negative coefficients on the summands $\mathbf{e}_1,\dots,\mathbf{e}_n$
    than positive coefficients, and set $\lambda_{n+1} = -1$.
    Without loss of generality, we can assume
    $\lambda_1 = \dots = \lambda_{(n-2k+1)/2} = 1$ and the remaining $\lambda_i = -1$.
    Call their convex hull $F'$.
    Form $\ell(x)$ as before and note that the first $n$ vertices of $F'$ satisfy
    $\ell(x_1,\dots,x_n) = 1$.
    Additionally, the vertices $\mathbf{v}$ of $P_N$ not in $F'$ satisfy
    $\ell(x_1,\dots,x_n) < 1$.
    However, $\ell(-\mathbf{e}_1 - \cdots - \mathbf{e}_n) = k$,
    so $\aff(F')$ actually passes through the interior of $P_N$ and
    cannot define a facet.

    Note as well that if we take any $n$-element subset of~\eqref{eq: sign pattern}
    without $\pm(\mathbf{e}_1+\cdots+\mathbf{e}_n)$,
    then we come across a similar problem as in the previous paragraph.
    If we take an $n$-element subset that excludes $\pm \mathbf{e}_j$ for some $j$,
    then the resulting hyperplane is exactly the same as if we included $\pm \mathbf{e}_j$.
    Therefore, there are no facets of any other form.
\end{proof}

By permuting the coordinates of each facet of $P_N$,
the elements of $\mathcal{F}(P_N)$ are all unimodularly equivalent to each other.
Additionally, it is clear that
\[
    |\mathcal{F}(P_N)| = 2\binom{N-1}{N/2 - 1}
\]
since each $\lambda \in \Lambda_n$ corresponds to a unique facet of $P_N$
containing $\mathbf{e}_1 + \cdots + \mathbf{e}_n$.
There are $\binom{n}{(n-1)/2}$ elements in $\Lambda_n$,
and this must be doubled to account for the facets containing
$-(\mathbf{e}_1 + \cdots + \mathbf{e}_n)$.

In order to determine the number of simplices in a unimodular triangulation
of $P_N$, we now only need to compute the number of simplices in a unimodular triangulation of a facet.
For convenience, we will select the facet $F_0$ from the previous proof.
Applying the unimodular matrix transformation $x \mapsto Ax$, where $A = (a_{i,j})$ is the $n \times n$ matrix
\[
	a_{i,j} = \begin{cases}
			1 & \text { if } i = j \text{ or both } i = n,\, j < (n-1)/2 \\
			-1 & \text{ if both } i = n,\, (n-1)/2 < j < n\\
			0 & \text{ else }
		\end{cases}
\]
we obtain a polytope whose vertices are identical to those of $F_0$ in the first
$n-1$ coordinates and are exactly $1$ in the final coordinate.
This allows us to consider $\overline{F}_0$, the projection of $f(F_0)$ to the
first $n-1$ coordinates.
As a result, we have
\[
	\overline{F}_0 =
    \conv
    \left \{
        \mathbf{0},
        \mathbf{e}_1,
        \dots,
        \mathbf{e}_{(n-1)/2},
        -\mathbf{e}_{(n+1)/2},
        \dots,
        -\mathbf{e}_{n-1},
        -\sum_{i=1}^{n-1} \mathbf{e}_i
    \right \}
    \subseteq \R^{n-1}.
\]

Notice that we can write $\overline{F}_0 = \conv \{G_1 \cup G_2\}$, where
\[
	G_1 = \conv \left \{
        \mathbf{e}_1,\dots, \mathbf{e}_{(n-1)/2}, -\sum_{i=1}^{n-1} \mathbf{e}_i
    \right \}
\]
and
\[
	G_2 = \conv\{(0,- \mathbf{e}_{(n+1)/2},\dots,- \mathbf{e}_{n-1}\}.
\]
Moreover, the intersection of their affine spans is a single point
\[
	\{ \mathbf{v}_0 \} =
    \aff(G_1) \cap \aff(G_2) =
    \left \{
    \left(
        0,\dots,0,-\frac{1}{(n+1)/2},\dots,-\frac{1}{(n+1)/2}
    \right)
    \right \}.
\]
The lattices generated by $\aff(G_i) \cap \Z^{n-1}$ and $\mathbf{v}_0$, after translating by $-\mathbf{v}_0$, are
\[
	L_1 = \Z(\mathbf{e}_1 - v_0,\dots,\mathbf{e}_{(n-1)/2} - \mathbf{v}_0, -(\sum_{i=1}^{n-1} \mathbf{e}_i) - \mathbf{v}_0),
\]
and
\[
	L_2 = \Z(-\mathbf{v}_0, -\mathbf{e}_{(n+1)/2} - \mathbf{v}_0,\dots,-\mathbf{e}_{n-1} - \mathbf{v}_0) = \Z(\mathbf{v}_0,-\mathbf{e}_{(n+1)/2},\dots,-\mathbf{e}_{n-1}),
\]
where $\Z A$ indicates the set of $\Z$-linear combinations of elements of $A$.
The lattices $L_1$ and $L_2$ are \emph{complementary},
meaning they intersect only at $\mathbf{0}$,
and each point of $L = \Z(\mathbf{e}_1,\dots,\mathbf{e}_{n-1},\mathbf{v}_0)$ is a sum of a unique element from $L_1$ and a unique element from $L_2$.

Together, these facts mean $\overline{F}_0$ is the \emph{affine free sum} of
$G_1$ and $G_2$, as introduced in~\cite{BeckJayawantMcAllister}.
Since $G_2$ is a standard simplex, its normalized volume is $1$; moreover, it is known that the $k$-dimensional standard simplex $\Delta_k$ is Gorenstein of index $k+1$, that is, there exists a unique vector $v \in \Z^k$ (namely, $v = (-1,\dots,-1)$) such that the polar dual of $\Delta_k' = (k+1)\Delta_k + v$, defined as
\[
    \{x \in \R^k \mid x^Ty \leq 1 \text{ for all } y \in \Delta_k' \},
\]
is also a lattice polytope.
By~\cite[Corollary 5.9]{BeckJayawantMcAllister} and~\cite[Corollary 3.21]{BeckRobinsCCD2ed}, we have
\[
	\NVol(\overline{F}_0) = \NVol(G_1)\NVol(G_2) = \NVol(G_2).
\]
Therefore it remains to find the normalized volume of $G_2$, which is unimodularly equivalent to the simplex
\[
	\conv\{\mathbf{e}_1,\dots, \mathbf{e}_{(n-1)/2}, -(\mathbf{e}_1 + \cdots + \mathbf{e}_{(n-1)/2)}\}.
\]
It is straightforward to compute that this simplex has a normalized volume of $\frac{n-1}{2} + 1 = \frac{n+1}{2}$.
Connecting this argument back to our original goal, we have proven the following.

\begin{proposition}
	The normalized volume of each facet of $P(C_N)$ is $\frac{N}{2}$.
\end{proposition}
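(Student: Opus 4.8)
The plan is to reduce the claim to a single facet. The preceding proposition already shows that every facet of $P_N$ (recall that we have reduced to the case of even $N$) belongs to $\mathcal{F}(P_N)$ and that any two of them differ by a coordinate permutation, hence are unimodularly equivalent; so it suffices to compute $\NVol(F_0)$ for the one distinguished facet $F_0$, and I would do this by presenting $F_0$, after a normalized-volume-preserving change of coordinates, as a free sum whose normalized volume factors.

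First I would apply the unimodular matrix $A$ displayed above, sending $F_0$ to a polytope all of whose vertices have final coordinate $1$, and then drop that (constant) coordinate to obtain the $(n-1)$-dimensional lattice polytope $\overline{F}_0 \subseteq \R^{n-1}$; unimodularity of $A$ together with the constancy of the discarded coordinate gives $\NVol(F_0) = \NVol(\overline{F}_0)$. Next I would check the decomposition $\overline{F}_0 = \conv(G_1 \cup G_2)$ along with the two facts that make it an \emph{affine} free sum in the sense of \cite{BeckJayawantMcAllister}: that $\aff(G_1) \cap \aff(G_2) = \{\mathbf{v}_0\}$ is a single point, and that the sublattices $L_1$ and $L_2$ obtained from $\aff(G_i) \cap \Z^{n-1}$ by translating by $-\mathbf{v}_0$ are complementary and together generate the ambient lattice $L = \Z(\mathbf{e}_1,\dots,\mathbf{e}_{n-1},\mathbf{v}_0)$.

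With this structure established I would invoke \cite[Corollary 5.9]{BeckJayawantMcAllister} together with \cite[Corollary 3.21]{BeckRobinsCCD2ed} — the affine analogue of Lemma~\ref{lem:free sum-vol}, applicable because the standard simplex $G_2$ is Gorenstein — to obtain $\NVol(\overline{F}_0) = \NVol(G_1)\,\NVol(G_2)$. Here $\NVol(G_2) = 1$, since $G_2$ is, up to relabeling of coordinates, a standard simplex; and restricting to the lattice $L_1$ identifies $G_1$ with $\conv\{\mathbf{e}_1,\dots,\mathbf{e}_{(n-1)/2},-(\mathbf{e}_1 + \cdots + \mathbf{e}_{(n-1)/2})\} \subseteq \R^{(n-1)/2}$, whose normalized volume equals $\det(I + J) = \tfrac{n-1}{2} + 1 = \tfrac{n+1}{2} = \tfrac{N}{2}$, where $I$ and $J$ are the identity and all-ones matrices of size $\tfrac{n-1}{2}$. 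Chaining the equalities yields $\NVol(F_0) = \NVol(\overline{F}_0) = \tfrac{N}{2}$, and by the unimodular equivalence of all facets this value is shared by every facet of $P_N$. The determinant and volume evaluations are routine bookkeeping; the step I expect to be the real obstacle is verifying the affine-free-sum hypotheses — complementarity of $L_1$ and $L_2$, that they jointly span all of $L$, and the Gorenstein condition required by the cited multiplicativity theorem — since this is exactly what licenses factoring the normalized volume.
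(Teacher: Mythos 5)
Your proposal is correct and follows essentially the same route as the paper: project $F_0$ by the unimodular matrix $A$ to obtain $\overline{F}_0$, decompose $\overline{F}_0$ as the affine free sum of $G_1$ and $G_2$, invoke the multiplicativity result of Beck--Jayawant--McAllister together with the Gorenstein property of the standard simplex $G_2$, and evaluate $\NVol(G_1)=\det(I+J)=N/2$ while $\NVol(G_2)=1$. One small remark: your bookkeeping is actually cleaner than the paper's, which writes $\NVol(\overline{F}_0)=\NVol(G_1)\NVol(G_2)=\NVol(G_2)$ and then says ``it remains to find the normalized volume of $G_2$'' before computing the simplex that is in fact $G_1$ — the labels $G_1$ and $G_2$ are inadvertently swapped there, and your version resolves this consistently.
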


This gives us the final piece we need.

\begin{theorem}
    For a cycle graph of $N$ vertices, the adjacency polytope bound
    of~\eqref{equ:sync-laurent} is
    \[
        N \binom{N-1}{\lfloor (N-1)/2 \rfloor}.
    \]
\end{theorem}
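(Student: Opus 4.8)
The plan is to assemble the adjacency polytope bound from the pieces already established for $P_N$, splitting into the odd and even cases. Recall from the preceding discussion that $\nabla_{C_N}$ is unimodularly equivalent to $P_N$, that $P_N$ is totally unimodular, and that $\mathbf{0}$ is an interior point of $P_N$; consequently coning each boundary simplex of $P_N$ over $\mathbf{0}$ produces a unimodular triangulation of $P_N$, so $\NVol(\nabla_{C_N}) = \NVol(P_N)$ equals the number of maximal simplices in any unimodular triangulation of $\partial P_N$. Since every facet of $P_N$ has been shown to be totally unimodular, its own boundary (hence the facet itself, by coning over an interior point, or directly by triangulating) admits a unimodular triangulation whose simplex count is $\NVol(F)$ for a facet $F$. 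Therefore the key identity is
\[
    \NVol(\nabla_{C_N}) \;=\; |\mathcal{F}(P_N)| \cdot \NVol(F_0),
\]
where $F_0$ is any facet, using that all facets of $P_N$ are unimodularly equivalent.

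First I would handle the odd case $N = 2m+1$. Here $P_N$ is the del Pezzo polytope, which is simplicial, so each facet \emph{is} a unimodular simplex with $\NVol = 1$, and the facet count $N\binom{N-1}{(N-1)/2}$ is exactly the number of maximal simplices; this gives $\NVol(\nabla_{C_N}) = N\binom{N-1}{(N-1)/2} = N\binom{N-1}{\lfloor (N-1)/2\rfloor}$ directly. Second, I would handle the even case $N = 2m$ by combining the two Propositions just proved: the facet count is $|\mathcal{F}(P_N)| = 2\binom{N-1}{N/2-1}$, and each facet has normalized volume $\tfrac{N}{2}$. Multiplying,
\[
    \NVol(\nabla_{C_N})
    \;=\; \frac{N}{2}\cdot 2\binom{N-1}{N/2-1}
    \;=\; N\binom{N-1}{N/2-1}
    \;=\; N\binom{N-1}{\lfloor (N-1)/2\rfloor},
\]
where the last equality uses $\lfloor (N-1)/2\rfloor = N/2 - 1$ when $N$ is even, together with the symmetry $\binom{N-1}{k} = \binom{N-1}{N-1-k}$ if one prefers the complementary index. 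Thus in both parities the AP bound is $N\binom{N-1}{\lfloor (N-1)/2\rfloor}$.

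The main obstacle is the bookkeeping justifying the identity $\NVol(\nabla_{C_N}) = |\mathcal{F}(P_N)|\cdot\NVol(F_0)$ rather than any single computation. One must be careful that a unimodular triangulation of each facet glues consistently across shared lower-dimensional faces into a global unimodular triangulation of $\partial P_N$ — this is where total unimodularity of $P_N$ is used, since it guarantees every simplex with vertices among the columns (in particular every simplex appearing in a pulling/placing triangulation of the boundary) is unimodular, so one may take a single pulling triangulation of $\partial P_N$ and observe it restricts to a unimodular triangulation on each facet. Then the count of maximal simplices of $\partial P_N$ is $\sum_{F} \NVol(F) = |\mathcal{F}(P_N)|\cdot\NVol(F_0)$ by facet-equivalence, and coning over $\mathbf{0}$ preserves this count while producing a unimodular triangulation of $P_N$, whence $\NVol(P_N)$ equals it. Everything else is the arithmetic displayed above.
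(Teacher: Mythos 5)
Your proposal is correct and follows essentially the same route as the paper: unimodular equivalence of $\nabla_{C_N}$ with $P_N$, a unimodular triangulation obtained by coning the boundary over the interior point $\mathbf{0}$, and then $\NVol(P_N) = |\mathcal{F}(P_N)|\cdot\NVol(F_0)$, handling odd $N$ via the simplicial del Pezzo facet count and even $N$ via the two preceding Propositions. The paper states the coning/triangulation strategy once before the odd case and then simply multiplies in the even case; you spell out the gluing and total-unimodularity argument more explicitly, but the underlying computation is identical.
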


\begin{proof}
	We already saw that the conclusion holds for when $N$ is odd.
	When $N$ is even, we now simply count
	\[
		\NVol(\overline{F}_0)|\mathcal{F}(P_N)| = \left(\frac{N}{2}\right)2\binom{N-1}{N/2 - 1} = N\binom{N-1}{\lfloor (N-1)/2 \rfloor},
	\]
	as desired.
\end{proof}


By the inequalities~\eqref{equ:comparison}, the AP bound above is also
an upper bound for the birationally invariant intersection index.
We now show there is no gap between the two.
Let $F_{C_N} = (F_{C_N,1},\dots,F_{C_N,n})$ with each $F_{C_N,i}$
being a generic element from $L_{C_N,i}$,
and let $\nabla_{C_N,1},\dots,\nabla_{C_N,n}$ be their Newton polytopes respectively.
The BKK bound
of the system $F_{C_N}$ coincides with its AP bound by \cite[Proposition 1]{chen_unmixing_2017}.
We shall significantly strengthen this statement by showing that even though
the spaces $L_{C_N,1}, \dots, L_{C_N,n}$ are not generated by monomials,
the intersection index $[L_{C_N,1}, \dots, L_{C_N,n}]$ still agrees with
the BKK bound for $F_{C_N}$.
This is done by examining the initial systems of $F_{C_N}$.
In particular, we show that even though there are algebraic relations
among the coefficients for terms in $F_{C_N}$,
such relations will not appear in any nontrivial initial systems.

\begin{theorem}
    Given a cycle graph $C_N$ containing $N = n+1$ vertices,
    let $L_{C_N,1},\dots,L_{C_N,n}$ be the subspace of rational functions
    defined in~\eqref{equ:L-space}.
    Then
    \[
        [\, L_{C_N,1}, \dots, L_{C_N,n}\, ] = N \binom{N-1}{\lfloor (N-1)/2 \rfloor}
    \]
\end{theorem}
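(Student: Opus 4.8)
The plan is to show that the birationally invariant intersection index $[L_{C_N,1},\dots,L_{C_N,n}]$ equals the BKK bound of $F_{C_N}$, which we have already identified with the AP bound $N\binom{N-1}{\lfloor(N-1)/2\rfloor}$. By the chain of inequalities~\eqref{equ:comparison}, it suffices to show that the $\C^*$-solution count of~\eqref{equ:sync-laurent} for the cycle graph reaches the BKK bound. The natural tool is Theorem~\ref{thm:bernshtein-b}: if no initial system $\init_{\mathbf{v}} F_{C_N}$ has a solution in ${(\C^*)}^n$ for any nonzero $\mathbf{v}$, then the solution count is exactly the BKK bound. The subtlety, as emphasized in the paragraph preceding the statement, is that the coefficients of $F_{C_N}$ are \emph{not} generic: each space $L_{C_N,i}$ is spanned by $1$ together with the single rational function $x_i x_j^{-1} - x_i^{-1} x_j$ for each neighbor $j$, so the coefficients of $x_i x_j^{-1}$ and of $x_i^{-1} x_j$ are forced to be negatives of each other. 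Thus Lemma~\ref{lem:init-sys} does not directly apply, and one must check by hand that these enforced algebraic relations never survive into a nontrivial initial form.

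First I would classify the possible initial systems. For a nonzero $\mathbf{v} \in \R^n$, the initial form $\init_{\mathbf{v}} F_{C_N,i}$ picks out, among the monomials $1$, $x_i x_{i-1}^{-1}$, $x_{i-1} x_i^{-1}$, $x_i x_{i+1}^{-1}$, $x_{i+1} x_i^{-1}$ (indices mod $N$, with $x_0 = 1$), those on which $\langle \cdot,\mathbf{v}\rangle$ is minimized. The key structural observation is that a pair $x_i x_j^{-1}$ and $x_i^{-1} x_j$ have exponent vectors that are negatives of each other, hence $\langle \cdot,\mathbf{v}\rangle$ takes opposite values on them; they can be simultaneously selected by $\init_{\mathbf{v}}$ only when that common value is $0$, i.e. when $v_i = v_j$ (taking $v_0 = 0$). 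So the enforced $+1/-1$ coefficient relation resurfaces in an initial form only along edges $(i,j)$ where $\mathbf{v}$ is constant. I would then argue that on the set of vertices where $\mathbf{v}$ is constant and equal to $0$, the corresponding initial equations are inherited unchanged from a \emph{subgraph} of the cycle — a disjoint union of paths — and for such subgraphs the argument of the tree case (Lemma~\ref{lem:tree-transform}, the monomial change of variables $\phi$) shows the initial system is monomial-equivalent to a generic square system with no $\C^*$-solutions; on vertices where $v_i \ne 0$ or where $\mathbf{v}$ is constant but nonzero, the selected monomials are a proper sub-collection whose coefficients are genuinely independent (the $+1/-1$ partner having been dropped), so Lemma~\ref{lem:init-sys}-type genericity applies termwise. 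Combining these pieces, $\init_{\mathbf{v}} F_{C_N}$ has no $\C^*$-solution for any nonzero $\mathbf{v}$.

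Once that is established, Theorem~\ref{thm:bernshtein-b} gives that $F_{C_N}$, for suitable coefficients respecting the constraints of the $L_{C_N,i}$, has exactly $\mathrm{BKK}(F_{C_N})$ isolated $\C^*$-solutions, and this BKK bound equals the AP bound $N\binom{N-1}{\lfloor(N-1)/2\rfloor}$ by the computation in the preceding theorem together with \cite[Proposition 1]{chen_unmixing_2017}. Since the $\C^*$-solution count is squeezed between itself and $[L_{C_N,1},\dots,L_{C_N,n}]$ from below, and $[L_{C_N,1},\dots,L_{C_N,n}] \le \mathrm{BKK} = \mathrm{AP}$ from above by~\eqref{equ:comparison}, all three coincide, yielding $[L_{C_N,1},\dots,L_{C_N,n}] = N\binom{N-1}{\lfloor(N-1)/2\rfloor}$.

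The main obstacle I anticipate is the middle step: rigorously handling the initial systems along edges where $\mathbf{v}$ is constant. One must be careful that when $\mathbf{v}$ is constant on a vertex set $S$ with common value $0$, the restriction of the initial system really does coincide with the full Laurent system of the induced subgraph (a union of paths), so that the tree-type unimodular substitution $\phi$ applies and reveals a system of independent univariate equations with no $\C^*$-solution; and when the common value is nonzero, or $\mathbf{v}$ is non-constant, one must check that enough monomials are dropped that the surviving coefficients can be treated as algebraically independent. Making the genericity bookkeeping precise here — essentially verifying that the variety cut out by the constraint ``$c_{x_i x_j^{-1}} = -c_{x_i^{-1}x_j}$'' still meets the relevant stratum generically — is the technical heart of the proof.
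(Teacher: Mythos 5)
Your high-level plan coincides with the paper's: reduce to showing that for every nonzero $\mathbf{v}$ the initial system $\init_{\mathbf{v}}F_{C_N}$ has no $\C^*$-root, then invoke Theorem~\ref{thm:bernshtein-b} to equate the $\C^*$-count with the BKK bound (= AP bound), and squeeze with~\eqref{equ:comparison}. You also correctly identify the one subtlety --- the coefficients of $x_i x_j^{-1}$ and $x_i^{-1}x_j$ in $L_{C_N,i}$ are forced to be negatives of each other, so Lemma~\ref{lem:init-sys} cannot be applied verbatim --- and you correctly observe that this forced relation can only survive into $\init_{\mathbf{v}}$ along an edge where $\langle \mathbf{e}_i-\mathbf{e}_j,\mathbf{v}\rangle = 0$.

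However, there is a genuine gap in your ``middle step,'' and the decomposition you propose does not actually occur. First, if $\init_{\mathbf{v}}F_{C_N,i}$ is a single monomial for even one $i$, the initial system trivially has no $\C^*$-root; you should dispose of this case explicitly, because the remaining analysis needs the hypothesis that \emph{every} $\init_{\mathbf{v}}F_{C_N,i}$ has at least two terms. Under that hypothesis, the situation collapses much more sharply than your stratification suggests. Since each $\nabla_{C_N,i}$ contains the origin and is symmetric about it, $\Ht_{\mathbf{v}}(\nabla_{C_N,i})\le 0$ always; and if $\Ht_{\mathbf{v}}(\nabla_{C_N,i})=0$ with a non-singleton initial face, every term is at height $0$, forcing $v_i=v_j=v_k$ for both neighbors $j,k$. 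But then the non-singleton hypothesis at the \emph{neighbor} $j$ forces $v_j = v_{j'}$ for its other neighbor $j'$ as well (otherwise $\init_{\mathbf{v}}F_{C_N,j}$ would be a single monomial). This propagates around the cycle and, since $v_0=0$, yields $\mathbf{v}=\mathbf{0}$. So the set of vertices where $\mathbf{v}$ is locally constant is either all of $C_N$ (contradiction with $\mathbf{v}\neq\mathbf{0}$) or empty; your ``disjoint union of paths'' subgraph on which you wanted to invoke the tree reduction (Lemma~\ref{lem:tree-transform}) never materializes. In the remaining (and only) case, $\Ht_{\mathbf{v}}(\nabla_{C_N,i})<0$ for some $i$, and the same connectivity/propagation argument shows it is $<0$ for \emph{every} $i$; then for each edge at most one of the pair $x_ix_j^{-1}$, $x_i^{-1}x_j$ can appear in any $\init_{\mathbf{v}}F_{C_N,\ell}$, so the surviving coefficients are genuinely unconstrained and Lemma~\ref{lem:init-sys} applies. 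That propagation step --- negativity of the height spreads across edges of the connected graph under the non-singleton hypothesis --- is the missing ingredient; you acknowledge this gap yourself (``the technical heart of the proof''), and as written, the tree-case detour would not close it.
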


\begin{proof}
    Let $\mathbf{v}$ be a vector in $\R^n$ such that
    ${(\nabla_{C_N,i})}_{\mathbf{v}}$ is not singleton for any $i=1,\dots,n$.
    Since the polytopes $\nabla_{C_N,i}$ all contain the origin, we must have
    \[
        \Ht_{\mathbf{v}}(\nabla_{C_N,i}) :=
        \min \{
            \langle \mathbf{x}, \mathbf{v} \rangle
            \mid \mathbf{x} \in \nabla_{C_N,i}
        \} \le 0 \quad \text{for all } i.
    \]
    If $\operatorname{ht}_{\mathbf{v}}(\nabla_{C_N,i}) = 0$ for all $i$
    then $\langle \pm (\mathbf{e}_i - \mathbf{e}_j), \mathbf{v} \rangle = 0$
    for any pair of $(i,j) \in E(C_N)$.
    It is then easy to verify that $\mathbf{v} = \mathbf{0}$.

    Now, supposing $\mathbf{v} \ne \mathbf{0}$, there must be a vertex
    $i \in \{1,\dots,n\}$ for which
    $\operatorname{ht}_{\mathbf{v}}(\nabla_{C_N,i}) < 0$.
    Recall that $\nabla_{C_N,i}$ has at most four vertices:
    $\{ \pm (\mathbf{e}_i - \mathbf{e}_j),  \pm (\mathbf{e}_k - \mathbf{e}_j) \}$
    where $\{j,k\} = \mathcal{N}_{C_N}(i)$.
    But $\langle \bullet, \mathbf{v} \rangle$ must attain negative values for
    at least two points in this set.
    That means there are exactly two points
    $\mathbf{b}_j \in \{ \mathbf{e}_i-\mathbf{e}_j, \mathbf{e}_j-\mathbf{e}_i \}$
    and
    $\mathbf{b}_k \in \{ \mathbf{e}_i-\mathbf{e}_k, \mathbf{e}_k-\mathbf{e}_i \}$
    such that
    $\langle \mathbf{b}_j, \mathbf{v} \rangle < 0$ and
    $\langle \mathbf{b}_k, \mathbf{v} \rangle < 0$.
    However, since $\mathbf{b}_j \in \nabla_{C_N,j}$
    and $\mathbf{b}_k \in \nabla_{C_N,k}$,
    $\operatorname{ht}_{\mathbf{v}}(\nabla_{C_N,j})$ and
    $\operatorname{ht}_{\mathbf{v}}(\nabla_{C_N,k})$ are both negative.
    In other words, if $\operatorname{ht}_{\mathbf{v}}(\nabla_{C_N,i}) < 0$
    for some vertex $i$,
    then $\operatorname{ht}_{\mathbf{v}}(\nabla_{C_N,j}) < 0$
    for any $j \in \mathcal{N}_{C_N}(i)$.
    Since $C_N$ is connected, as this implication propagates through the graph,
    we can conclude that $\operatorname{ht}_{\mathbf{v}}(\nabla_{C_N,j}) < 0$
    for all $i \in \{1,\dots,n\}$.
    Consequently, for each $(i,j) \in E(C_N)$, the two points
    $\mathbf{e}_i - \mathbf{e_j}$ or $\mathbf{e}_j - \mathbf{e}_i$ cannot both
    be in ${(\nabla_{C_N,i})}_{\mathbf{v}}$ or ${(\nabla_{C_N,j})}_{\mathbf{v}}$.
    Recall that $\mathbf{e}_i - \mathbf{e}_j$ and $\mathbf{e}_j - \mathbf{e}_i$
    are the exponent vectors of $\frac{x_i}{x_j}$ and $\frac{x_j}{x_i}$ respectively.
    Therefore either $\frac{x_i}{x_j}$ or $\frac{x_j}{x_i}$ appear in
    $\init_{\mathbf{v}} F_{C_N}$, but not both.
    Consequently, monomials appearing in $\init_{\mathbf{v}} F_{C_N}$
    all have independent coefficients.
    Then by Lemma~\ref{lem:init-sys}, for generic choice of coefficients,
    the initial system $\init_{\mathbf{v}} (F_{C_N}) = \mathbf{0}$
    has no solution in $(\C^*)^n$.
    This is true for any nonzero vector $\mathbf{v}$,
    so by Theorem~\ref{thm:bernshtein-b}, the number of solutions
    $F_{C_N} = \mathbf{0}$ has in ${(\C^*)}^n$ is exactly the BKK bound.
    Since $F_{C_N}$ is a generic choice,
    we can conclude that $[L_{C_N,1}, \dots, L_{C_N,n}]$ agrees with
    the BKK bound and hence the AP bound shown above.
\end{proof}





\bibliographystyle{siamplain}
\bibliography{nobodies,kuramoto,conv,bkk,chen}

\end{document}